\documentclass[11pt]{article}

\usepackage{amsmath,amssymb,amscd,amsfonts, amsthm}
\usepackage{amsmath}
\usepackage{amssymb}
\usepackage{graphicx}
\usepackage{caption}
\usepackage{subcaption}
\usepackage{graphicx,latexsym}

\usepackage{anysize}

\usepackage{color}
\usepackage{xcolor}

\usepackage{epsfig}

 \usepackage{bm}

\usepackage[bookmarksopen, colorlinks,
  linkcolor =blue, urlcolor =red, citecolor =red, menucolor =blue]{hyperref}

\usepackage{chngcntr}

\usepackage{placeins}
\usepackage{enumitem}

\newcommand{\numparam}[1]{\textcolor{black}{#1}}
\newcommand\EH[1]{\textcolor{cyan}{EH: #1}}

\newcommand\R{\mathbb{R}}

\newcommand\norm[2][]{{\left\lVert#2\right\rVert_{#1}}}

\newtheorem{theo}{Theorem}
\newtheorem{lemma}[theo]{Lemma}

\newtheorem{prop}[theo]{Proposition}
\newtheorem{defin}{Definition}
\newtheorem{remark}{Remark}

\def\eps{{\varepsilon}}

\def\M{{\cal M}}
\def\W{{\cal W}}
\def\R{{\cal R}}
\def\K{{\cal K}}

\def\F{{\cal F}}
\def\Fa{{\cal F}_\alpha}

\def\La{{\cal L}_\alpha}
\def\Lak{{\cal L}_{\alpha_j}}

\def\.{{\;}}
\def\bv{{\tt{BV}}}

\def\F{{\cal F}}
\def\R{{\cal R}}
\def\W{{\cal W}}
\def\K{{\cal K}}

\def\Fa{{\cal F}_\alpha}
\def\La{{\cal L}_\alpha}

\begin{document}
\setcounter{footnote}{1}

\title{A piecewise constant levelset approach for semi-blind deconvolution:
       Application to barcode decoding}

\author{
A.~De\,Cezaro%
\thanks{Institute\,of\,Math.\,Statistics\,and\,Physics,
Federal\,Univ.\,of\,Rio\,Grande,\,Av.\,Italia\,km\,8,\,96201-900 Rio\,Grande,\,Brazil}
\ \ \
E.~Hafemann%
\thanks{Department of Mathematics, Federal University of St.\,Catarina,
       P.O. Box 476, 88040-900 Floripa, Brazil}
\thanks{Fachbereich Mathematik, Universit\"at Hamburg, Bundesstraße 55, 20146 Hamburg, Germany}
\ \ \
A.~Leit\~ao$^\ddag$}

\date{\small\today}

\maketitle

\begin{abstract}
We consider a semi-blind deconvolution problem arising in the decoding of blurred linear
barcodes.
Building on the Piecewise Constant Level Set (PCLS) framework introduced in [De\,Cezaro et
al., Inv.\,Probl., 29 (2013), 015003], we propose and analyze a solution method based on
augmented Lagrangians to obtain stable approximate solutions to the corresponding inverse
problem with respect to noisy measurements.
We establish the existence of generalized multipliers for the augmented Lagrangian functional
under consideration, as well as the absence of duality gaps. These results provide the
theoretical foundation required to prove regularization properties of the approximate
solutions produced by the proposed strategy.
Furthermore, we present an associated ADMM-type iterative scheme for the explicit computation
of approximate barcodes. Numerical experiments are carried out for various variance values
(responsible for the blurred effect) and several levels of noise, validating the effectiveness
of the proposed method.
\end{abstract}

\noindent {\small {\bf Keywords.}
Semi-blind deconvolution, Barcode decoding, Piecewise constant levelset, Augmented Lagrangian
.}

\medskip
\noindent {\small {\bf AMS Classification:} 65J20, 47J06, 65R32.}

\renewcommand{\thefootnote}{} \footnotetext{Emails:
\href{mailto:decezaromtm@gmail.com}{\tt decezaromtm@gmail.com}, \
\href{mailto:eduardo.hafemann@uni-hamburg.de}{\tt eduardo.hafemann@uni-hamburg.de}, \
\href{mailto:acgleitao@gmail.com}{\tt acgleitao@gmail.com}.}
\renewcommand{\thefootnote}{\arabic{footnote}}

\section{Introduction} \label{sec:1}

A {\em barcode} (or bar code) image serves as a visual representation of data.
A linear (or 1D) barcode, for example, comprises parallel lines and bars with
differing widths and gaps, encoding specific information.
Different barcode symbologies do exist, each with its unique pattern for
encoding data, e.g., linear type codes (e.g., ISBN, MSI, Royal Mail-4,
2-of-5 IATA, UPC and Code-128) and 2D type codes (including QR-Code,
Data Matrix, PDF417, MaxiCode and Aztec); visit the TEC-IT website
\href{https://www.tec-it.com/en/support/knowbase/Default.aspx}{https://www.tec-it.com/}
for a detailed presentation on barcode symbologies.

\vspace{-0.3cm}\paragraph{Barcode decoding:}
Barcodes find extensive application across diverse industries, serving multiple purposes, e.g.,
inventory management, retail sales, tracking shipments and ticketing. When scanned,
a linear barcode pattern of bars and spaces is translated into alphanumeric characters,
allowing the encoded information to be utilized.
The swift and precise scanning by barcode readers plays a crucial role in the
application of this technology.

\noindent
\begin{minipage}[t]{.4\textwidth}
{\ \ \ \ \,The decoding of the information in a linear barcode is typically
due to an optical scanner equipped with a light detector. A narrow laser beam
is emitted, which illuminates the barcode --Figure~1 (right). 
The amount of light detected, namely a voltage reading, generates a
1D electrical signal with peaks corresponding to the white parts (due
high light reflection) and valleys corresponding to the black bars (due
to little light reflection); see Figure~1 (left).}
\end{minipage}
\hfil\noindent
\begin{minipage}[t]{.7\textwidth}
\vskip-0.3cm \hskip-0.7cm
\centerline{\includegraphics[width=0.6\textwidth]{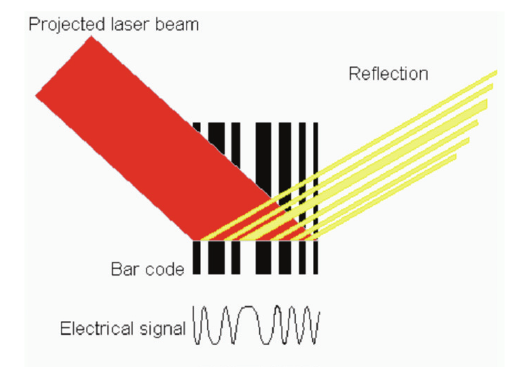}
\hskip-2.0cm\includegraphics[width=0.45\textwidth]{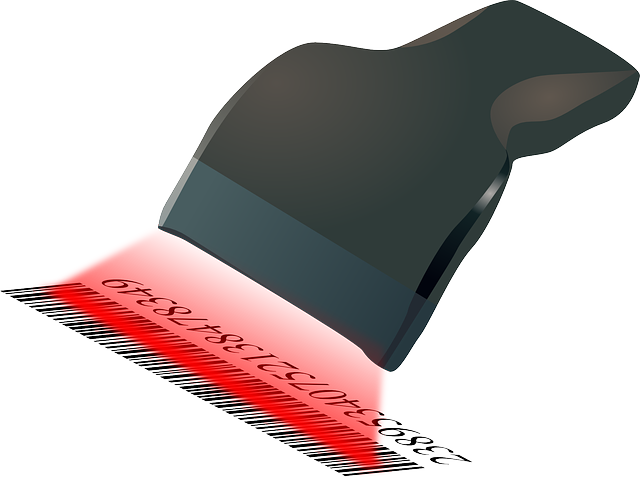}}
\vskip0.2cm \hskip-0.8cm
\centerline{\small Figure 1: Barcode optical scanner (source:\,pixabay.com).}
\end{minipage}
\medskip

\noindent\textbf{Mathematical model:} \
In this 1D setup the barcode can be described as a piecewise constant function
$u$ taking only two different known values (namely, zero for black and one for
white) in a given connected bounded interval $\Omega \subset \mathbb{R}$. More
precisely, we assume the existence of an open mensurable set
$D_1 \subset\subset \Omega$ such that
\begin{equation} \label{eq:u-pwc}
u(x) = 0 , \ x \in D_1 \quad {\rm and}
\quad u(x) = 1, \ x \in D_2 := \Omega \setminus D_1 \, .
\end{equation}

The laser beam has a variable intensity profile of a finite width. Consequently,
the voltage readings of the barcode by the scanner (available measurements) suffer
non-local effects.
Since the black bars in the barcode $u$ are finite ($\Omega$ is bounded), the
non-local effects far from the barcode location do not influence the scanning
process. Hence, in the mathematical model considered in this article, we assume
that the effects of the laser beam on the barcode are modeled by a point-spread
function (PSF) with compact support
\begin{align} \label{def:PSF}
K(x) \ = \ K(\gamma, \sigma)(x) \ = \ \chi_{\Omega}(x) \,
\frac{\gamma}{\sigma \sqrt{2\pi}} \, e^{-\frac{x^2}{2\sigma^2}} \, ,
\end{align}
where the variance $\sigma^2$ models the distance between the scanner and the
bar code surface (the longer the distance from the scanner to the barcode, the
larger the variance) and $\gamma > 0$ is the amplitude of the Gaussian kernel $K$
($\gamma$ encapsulates the process of converting light energy that interacts
with the barcode into the measurement).
Additionally, $\chi_\Omega$ denotes the characteristic function of $\Omega$.
Since the coefficients $(\sigma,\gamma)$ are not known, the kernel $K$ in
\eqref{def:PSF} is only partially known.

We consider the setting in which the amplitude $\gamma$ of the kernel $K$ is known.
The inverse problem under investigation consists of identifying both the variance
$\sigma^2$ and the barcode $u$.

The interaction of the barcode with the laser beam is modeled by the nonlinear operator
\begin{align} \label{def:B}
B: & L^2(\Omega) \times L^1(\Omega) \longrightarrow L^2(\Omega)\nonumber\\
   & \qquad (K,u) \qquad \longmapsto B(K,u) := K \ast u,
\end{align}
where the symbol $\ast$ represents the convolution operation; the functions
$u \in L^1(\Omega)$ and $K \in L^2(\Omega)$ are given as above. The signal
(or sensor reading)
$y \in L^2(\Omega)$ resulting from this interaction is described by the
well established mathematical model $y = K \ast u$ (see, e.g., \cite{E04,ES2012}).

\vspace{-0.3cm}\paragraph{Underlying inverse problem:}
The transformation of the barcode into the signal that is actually detected
by the scanner depends on a variety of factors such as: distance between the
optical scanner and the surface where the barcode appears (the farther away
the scanner is located, the more distorted/blurred the detected signal), the
illumination of the optical sensor (inside the barcode scanner), among others.
Therefore, in practice, only a noise version $y^\delta \in L^2(\Omega)$ of the
signal $y$ is available. We assume that the level of noise $\delta$ in the
signal can be estimated by
\begin{equation} \label{eq:noise}
\big\| y - y^\delta \big\|_{L^2(\Omega)} \ \leq \ \delta \, .
\end{equation}

Summarizing, the inverse problem we are dealing with is the one of identifying the
pair $(K,u)$ in the operator equation
\begin{equation} \label{eq:ip}
B(K,u) \ = \ y^\delta \, ,
\end{equation}
from noise measurements $y^\delta$ of the signal $y$ satisfying \eqref{eq:noise}.
An efficient method designed for solving \eqref{eq:ip}, \eqref{eq:noise} should take
into account the following {\em a priori} information: (i) available partial knowledge
of the PSF $K$, which is known to be of the form \eqref{def:PSF}; (ii) the fact that
$u$ (the barcode) is a piecewise constant function satisfying \eqref{eq:u-pwc}.

The barcode identification problem considered here differs from the standard image
analysis problems of image deblurring \cite{BBR18,ES2012} and blind deconvolution
\cite{Gennip15, LV2021, RSCHM2021}.
Here the convolution kernel $K$ is not completely known and the scalar parameter
$\sigma$ must be identified together with the barcode $u$. In other words, this
linear barcode identification problem can be viewed as a
{\em semi-blind deconvolution problem} \cite{E04, BDR18, LEZX2014}.

\vspace{-0.3cm}\paragraph{Literature overview:}
There is a vast amount of literature on barcode decoding. In this paragraph,
we focus on some inverse problems (and imaging) approaches that are relevant to the
content covered in this manuscript.

Commercial deblurring techniques for hand scanners are often based on edge
detection, which plays a relevant role in barcode decoding.
These techniques relate to the search for local extrema of the derivative of the
noisy signal $y^\delta$ (zero crossings of the second derivative provide edge
features for the classification of blurred images); however, it is known to be
highly sensitive to small changes in the data (ill-posedness) \cite{EngHanNeu96}.
Another drawback of this strategy is related to the standard deviation $\sigma$.
If $\sigma$ is "large" or if the support size of $K$ is very large when compared
to the smallest length scale of the minimum thickness of bars in the barcode,
then the extremes of the derivative of $y^\delta$ might not correspond to the
edges of the barcode to be decoded, even when $\delta=0$.
In \cite{KJ2005} the edge localization error caused by speckle noise (e.g.,
when a barcode signal is corrupted by additive noise which is a weakly stationary
random process) is estimated.
In \cite{JP1994} the severely blurred image scenario is considered;
statistical pattern recognition is used to classify the peaks as (UPC)
barcode characters.

In \cite{E04} the semi-blind decoding of linear barcodes in \eqref{eq:ip}, with
$K$ as in \eqref{def:PSF}, is considered. Energy minimization strategies with
total-variation (TV) penalization are proposed and analyzed for the stable
barcode decoding problem~\eqref{eq:ip} with noise measurements $y^\delta$
(e.g., existence and uniqueness of minimizers $(\gamma, \sigma, u)$ for a
Tikhonov functional based on $TV$-penalization is proven in \cite[Proposition~4]{E04}).
The recovered barcode is numerically obtained as the $\Gamma$ limit of a
"diffuse interface" approximation of an energy minimization strategy, with a
distinct level of blurring $\sigma$.

Other approaches to semi-blind barcode decoding are considered in
\cite{LEZX2014, RSCHM2021}. In \cite{LEZX2014}, for 1D barcodes, the authors
investigate the effect of nonstationary out-of-focus blurring, i.e. when
$(\gamma,\sigma) = (\gamma(s),\sigma(s))$ ($s$ is used to parameterize
the depth map of the barcode by its arc length \cite[Figure~1]{LEZX2014}).
An algorithm based on the minimization of an energy functional (using
penalization that includes the $l_0$ gradient norm of $u$) is proposed.
A low-dimensional representation of the PSF is estimated using the
Levenberg–Marquardt method. Once the PSF is obtained, a deblurred image
is computed by solving a quadratic program (a $[0,1]$ box constraint is
used to enforce a binary signal).
In \cite{RSCHM2021} a penalization based on Kullback-Leibler divergence
is considered for semi-blind decoding of UPC-A and QR barcodes.
The proposed approach, which incorporates and exploits barcode
symbologies, uses a primal-dual strategy.
The method does not aim to directly locate the cleaned image; instead, it
determines the probability density function of the image across all binary
arrays.
Subsequently, the cleaned image is derived from its threshold expectation.

In \cite{ES2012} a deblurring problem related to \eqref{eq:ip} is considered.
The blur kernel $K$ is assumed to be known. The authors provide conditions,
depending on $\sigma$ and the additive noise level, under which the variational
method proposed in \cite{E04} is able to recover the barcode. 
The analysis in \cite{ES2012} is extended in \cite{SG2022} for camera-reading
decoding of UPC barcodes, where the authors prove that, if the UPC barcode has the
narrowest bar corresponding to two thirds the size of the camera pixel, the
encoded message in the barcode can be uniquely determined.

In \cite{ISW2013} a symbology-based reconstruction of the UPC barcode decoding problem
is investigated.
The symbology of the barcode is incorporated directly into the reconstruction algorithm
and a sparse representation of the barcode is reconstructed with respect to this dictionary
(this approach reduces the number of degrees of freedom in the problem).

In \cite{CG10} total variation-based energy penalization is used to recover a blurred
1D barcode. Tikhonov functionals defined over all possible barcodes with fidelity to a
convoluted signal of a barcode and regularized by a $BV$-term. The fidelity terms consist
of the $L^2$-distance to the measured signal (or preceded by deconvolution).
The authors established sufficient conditions for the support of $K$, the level of
blurring $\sigma$, the size of the bars and the regularization parameters where the
underlying bar code is the unique minimizer. In \cite{CGO11} the same authors apply
total variation penalization and $L^1$-fidelity to the recovery of blurred 2D barcodes.

\vspace{-0.3cm}\paragraph{Main contributions:}
We introduce and study a regularization technique that relies on the primal solutions of a
special augmented Lagrangian functional linked to the underlying inverse problem. Specifically,
we establish the existence of generalized multipliers for the augmented Lagrangian functional
under study and show that no duality gap occurs. These findings form the theoretical basis
needed to prove the regularization behavior of the approximate solutions generated by the
proposed approach.

We also provide an ADMM-type iterative algorithm for the explicit computation of approximate
barcodes. Numerical experiments are conducted for a range of variance parameters (governing
the blurring effects) and noise intensities, confirming the effectiveness of the proposed
method.

\section{PCLS ansatzt, slack variable, augmented Lagrangian} \label{sec:level-set-formulation}

In this section we discuss three important tools for computing stable approximate
solutions $(K_\alpha^\delta,\phi_\alpha^\delta)$ to the inverse problem \eqref{eq:noise},
\eqref{eq:ip}, namely: 1) A PCLS ansatz \eqref{eq:def-pcls} used to represent the
unknown barcode; 2) the introduction of a slack variable regarading $K$ (this is a
quintessential step that enables the use of augmented Lagrangians); 3) An augmented
Lagrangian functional \eqref{eq:tikhonov-pcls3} that allows the efective computation
of the desired approximate solutions.

\paragraph{Piecewise Constant Level-Sets:}
In \cite{CLT13} a PCLS ansatzt is used to represent the solution of a nonlinear ill-posed
operator equation, under the {\em a priori} assumption that the sought solution is a
piecewise constant function taking only two possible (unknown) values
\cite{TaiChan04, NTAE07, CL11, CLT13, Agnelli2018}.
According to this approach, a function $u \in \bv(\Omega)$ satisfying $u(x) \in \{c^1, c^2\}$
a.e. in $\Omega$ is represented by
\begin{equation} \label{eq:def-pcls}
u \ = \ c^1 \psi_1(\hat{\phi})  +  c^2 \psi_2(\hat{\phi}) \ =: \ P_{pc}(\hat{\phi}) \, ,
\end{equation}
where $\hat{\phi}: \Omega \to \mathbb R$ is the piecewise constant levelset function defined
by $\hat{\phi}(x) := i-1$, $x \in D_i$, for $i = 1,2$, and $\psi_1(t) = 1 - t$, $\psi_2(t) = t$
are real functions.

\begin{remark} \label{remark:PCLS-bar-codes}
In the setting discussed in Section~\ref{sec:1} the function $u$ represents the unknown
barcode, which is known to satisfy $u(x) \in \{ 0,1 \}$ a.e. in $\Omega$ (see \cite{E04,
Gennip15, CGO11}).
This is equivalent to choosing $c^1 = 0$ and $c^2 = 1$ in \eqref{eq:def-pcls}. With this
particular choice, the operator $P_{pc}$ in \eqref{eq:def-pcls} reduces to the identity
operator.
\end{remark}

Here the PCLS ansatzt \eqref{eq:def-pcls} is used to represent the component
$u$ of the solution pair $(K,u)$ of the inverse problem \eqref{eq:ip}, \eqref{eq:noise}.
This allow us to incorporate to the operator equation \eqref{eq:ip} the {\em a priori}
information described in \eqref{eq:u-pwc}.
In view of Remark~\ref{remark:PCLS-bar-codes}, the inverse problem of barcode decoding
becomes the one of identifying the pair $(K,\phi)$ in the system
\begin{equation} \label{eq:inv-probl-fpc}
\left\{ \begin{array}{l}
B(K, \phi ) \ = \ y^\delta \\[1ex]
\W(\phi) \ = \ 0
\end{array} \right. ,
\end{equation}
from noise measurements $y^\delta$ satisfying \eqref{eq:noise}. Here
$K$ has the same meaning as in \eqref{eq:ip} and $\phi \in L^{4n}(\Omega)\cap
\bv(\Omega)$ is a Piecewise Constant LevelSet function. Additionally, $\W:
L^{4n}(\Omega) \cap \bv(\Omega) \to L^2(\Omega)$ is the nonlinear double-well
operator defined by $\W(\phi): = \phi^n(\phi-1)^n$, for $n\in \{1, 2, \cdots\}$
(see, e.g., \cite{TaiChan04, NTAE07} for details).

\begin{remark} \label{remark:1}
Notice that the level set function $\hat{\phi} \in L^{4n}(\Omega) \cap \bv(\Omega)$ in
\eqref{eq:def-pcls} satisfies $\hat{\phi}(x) := i-1$, $x \in D_i$, $i = 1, 2$;
therefore $\W(\hat{\phi}) = 0$.
On the other hand, if $\W(\phi) = 0$ for some function $\phi \in L^{4n}(\Omega)
\cap \bv(\Omega)$
then $\phi(x) \in \{0,1\}$, a.e. in $\Omega$ (see Lemma~\ref{lemma:calK-Ppc}~$(ii)$
below).
Consequently, any solution $(K,\phi)$ of system \eqref{eq:inv-probl-fpc} is also a
solution of \eqref{eq:ip} having piecewise constant component $\phi$.
\end{remark}

In order to obtain stable approximate solutions to problem \eqref{eq:inv-probl-fpc},
\eqref{eq:noise} we consider the minimizers $(K_\alpha^\delta,\phi_\alpha^\delta)$,
if exist, of the (regularized) constrained optimization problem
\begin{equation} \label{eq:inv-probl-copt1-tilde}
\left\{ \begin{array}{l}
   \min F_{\alpha}(K,\phi):= \| B(K, \phi ) - y^\delta \|^2_{L^2(\Omega)} 
                    \, + \, \alpha R(K, \phi) \\[1ex]
   \mbox{s.t. \ } \W(\phi) = 0
\end{array} \right. ,
\end{equation}
where $F_{\alpha}: L^2(\Omega) \times (L^{4n}(\Omega) \cap \bv(\Omega)) \to \mathbb{R}_+$
and the penalization functional 
$R:L^2(\Omega) \times (L^{4n}(\Omega) \cap \bv(\Omega)) \to \mathbb{R}_+$ is defined by

\begin{equation} \label{eq:def-R-tilde}
R(K,\phi) \ := \beta_1\norm{K - K_0}^2_{H^1(\Omega)} + \beta_2\ \| \phi \|^2_{L^{4n}} + \beta_3 |\phi |_\bv \, .
\end{equation}
Here $K_0 \in H^1(\Omega)$ is some \textit{a priori} and $\beta_j > 0$ for $j=1,2,3$
are scaling factors. Furthermore, $\alpha > 0$ plays the role of a regularization
parameter as in classical Tikhonov regularization \cite{EngHanNeu96}.

\begin{remark} \label{rm:existence}
In view of the continuity of the bilinear operator $B(\cdot,\cdot)$, see
Proposition~\ref{th:B-continuos}, the existence of $(K_\alpha^\delta,\phi_\alpha^\delta) =$
argmin $F_{\alpha}(K,\phi)$, follows form standard compactness arguments \cite{EngHanNeu96}.
However, there is no guarantee that $\phi_\alpha^\delta$ satisfies the desired constraint
$\W(\phi_\alpha^\delta) = 0$.
Notice that this constraint cannot be enforced by standard projection strategies, since
the levelset $\{\phi \in L^{4n}(\Omega),\, : \, \W(\phi) = 0\}$ is not a convex subset of
$L^{4n}(\Omega)$.

A well established technique in the optimization literature \cite{Ber82, RockWets98} is to solve
the constraint optimization problem~\eqref{eq:inv-probl-copt1-tilde} is by means of
duality theory (also known as the Lagrangian approach). 
In the case where $K$ is known, i.e. the deblurring problem, a regularization approach based
on the augmented Lagrangian formulation for problem \eqref{eq:inv-probl-copt1-tilde} is possible
(following the lines of the analysis presented in Section~\ref{sec:convergence-analysis}).
In particular, the existence of a generalized Lagrangian multiplier $\lambda_K$ supporting
an exact penalty (in this case $\W(\phi)=0$) follows,  with the necessary adaptations,
similarly to the one presented below in Subsection~\ref{subsec:existence}.
However, the $K$-dependence of such a Lagrangian multiplier $\lambda_K$ makes the ongoing
theory intractable, as far as we know, if $K$ is also unknown.   
\end{remark}

\paragraph{Introduction of a slack variable:}
Notice that to each solution $(K_\alpha^\delta,\phi_\alpha^\delta)$ of
\eqref{eq:inv-probl-copt1-tilde}, there corresponds a solution of the optimization problem
\begin{equation} \label{eq:inv-probl-copt}
\left\{ \begin{array}{l}
   \min  \F_{\alpha}(\K,K,\phi):= \| B(K, \phi ) - y^\delta \|^2_{L^2(\Omega)} 
                    \, + \, \alpha \R(\K, K, \phi) \\[1ex]
   \mbox{s.t. \ } \W(\phi) = 0 \ \mbox{ and } \ M(\K,K) = 0                  
\end{array} \right.\,,
\end{equation} 
where the functionals $\F_{\alpha}$ and $\R$ map from $X := L^2(\Omega) \times
L^2(\Omega) \times (L^{4n}(\Omega) \cap \bv(\Omega))$ to $\mathbb{R}_+$, with
$\R(\K, K, \phi) := R(K,\phi) + \beta_1\norm{\K - K_0}_{H_1(\Omega)}$. Moreover,
$M:L^2(\Omega) \times L^2(\Omega) \to L^2(\Omega)$ is defined by $M(\K,K) :=
\K - K$ and $\W$ is defined as in \eqref{eq:inv-probl-fpc}. 

In problem \eqref{eq:inv-probl-copt} the slack variable $\K$ is introduced
(in \cite{BDR18} a similar technique is used for semi-blind regularization).
This strategy is known from the Alternation Direction Method of Multipliers
(ADMM) \cite{BDR18} and from the equality/inequality constraint optimization theory
\cite{Ber82, RockWets98}.
It has the disadvantage of increasing the number of unknowns (when compared
to \eqref{eq:inv-probl-copt1-tilde}). However, the formulation in
\eqref{eq:inv-probl-copt} has the advantage of incorporating equality constraints
in all problem unknowns $(\K,K,\phi)$. This characteristic is quintessential
in the derivation of a complete convergence analysis for the regularization method
based on the minimizers of \eqref{eq:inv-probl-copt} (see Remark~\ref{rm:existence}
and Section~\ref{sec:convergence-analysis} for details).

\paragraph{Introduction of augmented Lagrangians:}
The duality theory followed in this manuscript starts with the introduction of the augmented Lagrangian functional
\begin{multline} \label{eq:tikhonov-pcls3}
L_\alpha(\K, K,\phi;\lambda,\mu) \ := \
   \| B(K,\phi) - y^\delta \|^2_{L^2(\Omega)} 
   + \alpha \R(\K,K. \phi) \\
   + \langle \lambda \, , \big( M(\K,K), \W(\phi) \big) \rangle
   + \mu \| \big( M(\K, K), \W(\phi) \big) \|
\end{multline}
(the notation $\langle\cdot,\cdot\rangle$ and $\|\cdot\|$ denote the inner product
and norm in $L^2(\Omega) \times L^2(\Omega)$ respectively).
Here the primal variables are $(\K, K, \phi) \in X$; the dual variables
$(\lambda, \mu) \in (L^2(\Omega))^2 \times \mathbb{R}_{+}$ are such that: $\lambda$
can be interpreted as a ``generalized Lagrange multiplier" and $\mu$ is a penalty
factor that allows one to establish a duality relation for problems of non-convex
type. This particular augmented Lagrangian functional is also known as the sharp
Lagrangian \cite[Chapter~11, Section K$^*$]{RockWets98}.  

The Lagrangian functional in \eqref{eq:tikhonov-pcls3} converts the (primal)
constrained optimization problem~\eqref{eq:inv-probl-copt} into a family of
unconstrained sub-problems (depending on $(\lambda, \mu)\in (L^2(\Omega))^2
\times \mathbb{R}_{+}$), so that the information provided by the
constraints $\W(\phi) = 0$ and $M(\K,K)=0$ is incorporated in the Lagrangian
$L_\alpha$.
%
%
%
This approach is effective when the subproblems are simpler to solve than
the original constrained problem and the Lagrangian function is able to
provide the zero duality gap property (see Subsection~\ref{subsec:existence}).
Since $\W(\phi)$ is nonlinear, the zero duality gap for the constrained
optimization problem \eqref{eq:inv-probl-copt} does not follow straightforward
(see Subsection~\ref{subsec:existence}).

The idea here is to compute regularized solutions $(K_\alpha^\delta,\phi_\alpha^\delta)$,  as the minimizer (primal solutions $(\K_\alpha^\delta, K_\alpha^\delta,\phi_\alpha^\delta)$, with $\W(\phi_\alpha^\delta)=0$ and $M(\K_\alpha^\delta,K_\alpha^\delta)=0$,  of the augmented Lagrangian~\eqref{eq:tikhonov-pcls3} associated to the constrained optimization problem~\eqref{eq:inv-probl-copt}.

\section{Convergence analysis}\label{sec:convergence-analysis}

We denote by $Ad \subset X$ the set of admissible functions defined by
$Ad:=\{(\K,K,\phi) \in X\}$, subject to $\W(\phi)=0$, $M(\K, K) = 0$ and
$K \ast \phi \in L^2(\Omega)$.
In the following, we present the main assumptions used in these notes.

\noindent
{\bf (A1)} $\Omega \subseteq \mathbb{R}$, 
is bounded. 
\newline
{\bf (A2)} $\alpha$, $\beta_j$ denote positive parameters.
\newline
{\bf (A3)} There exists $\hat{\sigma} >0$,  $\hat{K} = K(\hat{\sigma}) \in L^2(\Omega)$ and  $\hat{u} \in \bv(\Omega) \cap L^\infty(\Omega)$ satisfying $B(\hat{K},\hat{u}) = y$. Moreover, there exists a function $\hat{\phi} \in \bv(\Omega) \cap L^{4n}(\Omega)$ such that
$\hat{\phi} = \hat{u}$ and $ \W(\hat{\phi}) = 0$. 
\smallskip

 

\subsection{Existence of an exact penalty representation}\label{subsec:existence}

The Tikhonov functional $\Fa$ is not convex, so the classical Lagrange multiplier methods \cite{RockWets98} cannot be used to solve \eqref{eq:inv-probl-copt}. This is because it is uncertain whether a zero duality gap property can be established for nonconvex optimization problems \cite{RockWets98}.


In this subsection, we follow similar ideas presented in \cite{Agnelli2018, CLT13} to prove the existence of an exact penalty representation (see Definition~\ref{def:exact-penalty}) based on the abstract convexity framework introduced in \cite{RockWets98}. It turns out that this property implies the zero-duality gap. Furthermore, it becomes the key ingredient to prove the well-posedness, the convergence, and stability of approximate solutions given by the augmented Lagrangian \eqref{eq:tikhonov-pcls3} (see Section~\ref{subsec:doble-well-convergence-analysis}).

We introduce some notation and functions related to the augmented Lagrangian approach that are necessary for the upcoming analysis.

\begin{defin} \label{def:vector-pc}
Let $\Fa$, $\W$ and $M$ be defined in the \eqref{eq:inv-probl-copt} and the Assumptions~\textbf{A1)} - \textbf{A3)} holds true.

\begin{enumerate}
\item $\Gamma: (L^2(\Omega))^2  \rightrightarrows L^2(\Omega)$ is the function with set values that satisfies $\Gamma(z) :=
\{ (\K,K, \phi) \in X \,:\,(M(\K,K), \W(\phi)) = z \}$, for each $z \in (L^2(\Omega))^2$. 

\item The indicator function of a set $A$ is defined by $\delta_A(\eta) := 0$, if $\eta \in A$ and $\delta_A(\eta) := +\infty$, otherwise.

\item We define $\widetilde{\Fa}(\K,K,\phi) := \Fa(\K,K,\phi)$, if $(\K,K,\phi) \in X \cap \Gamma(0)$ and $\widetilde{\Fa}(\K,K,\phi) := +\infty$, otherwise.

\item A dualizing parametrization function for
$\widetilde{\Fa}$ is chosen in the following way $f: X 
\times (L^2(\Omega))^2 \rightarrow \mathbb{R}\cup \{+ \infty\}$, with $f(\K,K,\phi, z) :=
\Fa(\K,K,\phi) + \delta_{\Gamma(z)}(\K,K,\phi)$ if $(\K, K,\phi) \in X$ and $f(\K,K,\phi, z) = +\infty$, otherwise. 
The function $f$ satisfies $f(\K,K,\phi,0) = \widetilde{\Fa}(\K,K,\phi)$, for each $(\K,K,\phi) \in X$.

\item The perturbation function (of the primal problem) related to this duality parametrization is given by $\theta: (L^2(\Omega))^2 \to
\mathbb{R}$, where $\theta(z) := \inf_{(\K,K,\phi) \in X} f(\K,K,\phi,
z)$. 

\item\label{item-coupling} The coupling function $\rho: (L^2(\Omega))^2\times (L^2(\Omega))^2 \times
\mathbb{R}_+ \to \mathbb{R}$ is defined by $\rho(z,\lambda,\mu) := -
\langle \lambda , z \rangle - \mu\,\norm{z}$.

\item The augmented Lagrangian functional induced by the coupling function $\rho$ reads as 
\begin{equation}\label{eq:L}
\La (\K,K,\phi;\lambda,\mu)  =  \textstyle \inf\limits_{z \in (L^2(\Omega))^2} \{ f(\K,K,\phi,z) - \rho(z,\lambda,\mu) \} \,.
\end{equation}

\item The dual function $Q: (L^2(\Omega))^2\times \mathbb{R}_+  \to \mathbb{R}$ is defined by
$$Q(\lambda,\mu) := \inf\limits_{(\K,K,\phi) \in X} \La (\K,K,\phi;\lambda,\mu)$$ 
and the dual  problem is stated as
\[ \text{maximize } Q(\lambda,\mu) \quad \text{subject to  } (\lambda,\mu) \in (L^2(\Omega))^2\times \mathbb{R}_+ . \]
\end{enumerate}
\end{defin}

It follows from item~5 of Definition~\ref{def:vector-pc} that $L_\alpha$ defined in \eqref{eq:tikhonov-pcls3} coincides with $\La$ in \eqref{eq:L}, for any $(\K,K, \phi) \in \Gamma(z)$  (indeed, the dualizing parameter function $f$ satisfies $f(\K,K, \phi,z) = + \infty$ whenever $(\K,K,\phi) \not\in \Gamma(z)$).
\\
Notice that $\La (\K,K, \phi;\lambda,\mu)$ coincides with
$\Fa(\K,K,\phi)$ in \eqref{eq:inv-probl-copt}, whenever $\W(\phi)=0$ and
$M(\K ,K)=0$ (indeed, $\W(\phi)=0$ and $M(\K ,K)=0$ imply $(\K,K, \phi)
\in \Gamma(0)$).

From item~8 of Definition~\ref{def:vector-pc}, it can be deduced that $Q(\lambda, \mu)$ is equal to the infimum of $\theta(z) - \rho(z,\lambda, \mu)$ over all $z$ in $(L^2(\Omega))^2$, with $\theta$ being the perturbation function and $\rho$ the
coupling function (see items~5 and~6).

Let $V_p := \inf\limits_{(\K,K,\phi) \in X} \widetilde{\Fa}(\K,K,\phi)$ and
$V_d := \sup\limits_{(\lambda,\mu) \in (L^2(\Omega))^2 \times \mathbb{R}_+}
Q(\lambda,\mu)$  be the optimal values of the primal and dual problems
respectively. It follows from the definitions of $f$ and $\rho$ that our scheme
has the weak duality property, i.e.
\begin{equation} \label{eq:weak-duality}
V_d \ \leq \ V_p \, .
\end{equation}

The difference between the values of $V_p$ and $V_d$ is known as the duality gap. The upcoming analysis of the augmented Lagrangian approach requires an exact penalty representation \cite{Melo2010, RockWets98} as its primary element, as it implies a zero duality gap, as shown in the following.

\begin{defin} \label{def:exact-penalty}
A vector $\bar{\lambda} \in (L^2(\Omega))^2$ is said to support an \textit{exact penalty representation} for the problem of minimizing $\widetilde{\Fa}$, if there exists $\bar{\mu} > 0$ such that for any  $\mu > \bar{\mu}$
\begin{equation} \label{eq:def-exact-penalty}
\theta(0) \ = \ Q(\bar{\lambda},\mu)   \quad\quad  {\rm and}
\quad\quad {\rm arg}\!\!\!\!\min_{(\K,K, \phi)} \, \widetilde{\Fa}(\K,K,\phi)
\ = \
{\rm arg}\!\!\!\!\min_{(\K,K, \phi)} \, \La (\K,K,\phi;\bar{\lambda},\mu) \, .
\end{equation}
Alternatively, such a vector $\bar{\lambda}$ is said to support an exact penalty representation for the problem of minimizing $\Fa $ under the constraints
$\W(\phi) = 0$ and $M(\K,K)=0$, see for example  \cite{Melo2010}.
\end{defin}


In order to prove the main result of this subsection, we shall first present some preliminary results concerning Definition~\ref{def:vector-pc}. 

\begin{lemma} \label{lemma:C-1-2}
The following assertions hold true:
\begin{itemize}
\item[i)] For any $(\lambda,\mu) \in (L^2(\Omega))^2\times \mathbb{R}_+$
the function $\rho(\cdot,\lambda,\mu)$ is upper semi-continuous at
$0$ and satisfies $\rho(0,\lambda,\mu) = 0$.
\item[ii)] For each $z \in (L^2(\Omega))^2$ and $\lambda \in (L^2(\Omega))^2$,
the function $\rho(z,\lambda, \cdot)$ is monotonically decreasing in $\mathbb{R}_+$.
\item[iii)] For every neighborhood $V \subset (L^2(\Omega))^2$ of $z=0$ and for every $(\lambda, \bar{\mu}) \in (L^2(\Omega))^2\times\mathbb{R}_+$,
it holds
\subitem{a)} $A^V_{\lambda,\bar{\mu}}(\mu) := \inf_{z \in V^C}
\{ \rho(z, \lambda, \bar{\mu}) - \rho(z, \lambda, \mu) \} > 0$,
$\forall \mu > \bar{\mu}$;
\subitem{b)} $\lim_{\mu \rightarrow \infty} A^V_{\lambda, \bar{\mu}}(\mu) = \infty$.
\end{itemize}
\end{lemma}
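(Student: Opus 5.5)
All three assertions follow by direct computation from the explicit form $\rho(z,\lambda,\mu) = -\langle \lambda, z\rangle - \mu\|z\|$ in item~\ref{item-coupling} of Definition~\ref{def:vector-pc}. No earlier result is needed beyond the Cauchy--Schwarz inequality in $(L^2(\Omega))^2$. I will treat i), ii) and iii) in that order.

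For i), I evaluate at $z=0$: $\rho(0,\lambda,\mu) = -\langle\lambda,0\rangle - \mu\|0\| = 0$. The map $z\mapsto -\langle\lambda,z\rangle$ is a continuous linear functional, and $z\mapsto -\mu\|z\|$ is continuous since the norm is continuous. Hence $\rho(\cdot,\lambda,\mu)$ is continuous on all of $(L^2(\Omega))^2$, and in particular upper semicontinuous at $0$. For ii), I fix $z$ and $\lambda$. Then $\mu\mapsto \rho(z,\lambda,\mu)$ is affine in $\mu$ with slope $-\|z\|\le 0$, so it is monotonically decreasing (nonincreasing, and strictly decreasing when $z\neq 0$).

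For iii), the key identity is
\begin{equation*}
\rho(z,\lambda,\bar\mu) - \rho(z,\lambda,\mu) = (\mu-\bar\mu)\,\|z\|,
\end{equation*}
since the linear terms in $\lambda$ cancel. This makes both items independent of $\lambda$. Given a neighborhood $V$ of $0$, I choose $r>0$ with the open ball $B_r(0)\subset V$. Then every $z\in V^C$ satisfies $\|z\|\ge r$, so
\begin{equation*}
A^V_{\lambda,\bar\mu}(\mu) = (\mu-\bar\mu)\inf_{z\in V^C}\|z\| \ \ge\ (\mu-\bar\mu)\, r .
\end{equation*}
For $\mu>\bar\mu$ this lower bound is strictly positive, which gives a). Letting $\mu\to\infty$, the lower bound tends to $+\infty$, which gives b).

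The only point requiring care is the degenerate case $V^C=\emptyset$, that is, $V$ is the whole space. There the infimum over the empty set equals $+\infty$ by convention, so both a) and b) hold trivially, and I will state this explicitly. Beyond this, there is no real obstacle: the argument reduces to the observation that the linear part of $\rho$ cancels and that $\|z\|$ stays bounded away from $0$ off a neighborhood of the origin.
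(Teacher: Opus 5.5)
Your proposal is correct and follows the paper's route: i) and ii) are read off directly from $\rho(z,\lambda,\mu) = -\langle\lambda,z\rangle - \mu\|z\|$, and iii) rests on the identity $\rho(z,\lambda,\bar\mu)-\rho(z,\lambda,\mu) = (\mu-\bar\mu)\|z\|$. You also supply details the paper leaves implicit, namely the bound $\|z\|\ge r>0$ on $V^C$ for a ball $B_r(0)\subset V$ and the trivial case $V^C=\emptyset$, and these are exactly what turn that identity into a) and b).
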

\begin{proof}
The continuity and monotonicity of $\rho$ as well as the fact that
$\rho(0, \lambda, \mu) = 0$ are direct consequences of the definition of the
coupling function $\rho$ (see Definition~\ref{def:vector-pc},
item~\ref{item-coupling}). This proves assertions~i) and~ii).
Assertion~iii) follows from the identity $\rho(z,\lambda,\bar{\mu})
- \rho(z,\lambda,\mu) = (\mu - \bar{\mu}) \norm{z}$.
\end{proof}

In the sequel, we demonstrate certain characteristics of the perturbation function $\theta$. We start by introducing the Fenchel-Moreau conjugate and biconjugate functions, as well as the abstract subgradient.

\begin{defin}[{\cite[Chapter~11]{RockWets98}}] \label{def:conjugate}
The Fenchel--Moreau conjugated and biconjugated functions of $\theta$ with respect to the coupling function $\rho$ are defined respectively by
$$
\theta^\rho(\lambda,\mu) = \sup\limits_{z\in (L^2(\Omega))^2}\{\rho(z,\lambda,\mu) - \theta(z)\}
\quad \mbox{ and } \quad
\theta^{\rho\rho}(z) = \sup\limits_{(\lambda,\mu) \in (L^2(\Omega))^2\times \mathbb{R_+}}
                       \{\rho(z,\lambda,\mu) - \theta^\rho(\lambda,\mu)\} \, .
$$
Moreover, given $\epsilon \ge 0$, an element $(\lambda,\mu) \in
(L^2(\Omega))^2\times \mathbb{R_+}$ is called $\epsilon$-abstract subgradient of
$\theta$ at $\bar{z}$ with respect to $\rho$ when
$\theta(z) - \rho(z,\lambda,\mu) \geq \theta(\bar{z}) - \rho(\bar{z},\lambda,\mu)- \varepsilon$, for all $z \in (L^2(\Omega))^2$.
The set of all $\varepsilon$-abstract subgradients of $\theta$ at $\bar{z}$ is called $\epsilon$-subdifferential of $g$ at $\bar{z}$ and is denoted by
$\partial_{\rho,\varepsilon} \theta(\bar{z})$.
\end{defin}

\begin{lemma} \label{lemma:six}
Consider the perturbation function $\theta$ as in Definition~\ref{def:vector-pc} 
\begin{itemize}
\item[i)] The perturbation function $\theta$ is lsc at $z=0$.
\item[ii)] Moreover, from the
definitions of $\theta^\rho$ and $\theta^{\rho\rho}$ it follows that \
$dom(\theta^\rho) \not= \emptyset$, \ $\theta^\rho(\lambda,\mu) = - Q(\lambda,\mu)$
\ and \ $\theta^{\rho\rho}(z) \leq \theta(z)$.
\item[iii)] The weak duality property is true, that is, $V_d \le V_p$.
\end{itemize}
\end{lemma}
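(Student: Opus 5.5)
The three assertions are handled in the order (ii), (iii), (i); the lower semicontinuity in (i) is the only part that is not a direct manipulation of infima and suprema.

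\emph{Assertion (ii).} I would first compute $Q$ through $\theta$. By definition,
$$Q(\lambda,\mu)=\inf_{(\K,K,\phi)}\inf_{z}\{f(\K,K,\phi,z)-\rho(z,\lambda,\mu)\}.$$
Interchanging the two infima gives $\inf_z\{\theta(z)-\rho(z,\lambda,\mu)\}=-\theta^\rho(\lambda,\mu)$. For the nonemptiness of the domain, I would take $(\lambda,\mu)=(0,0)$. Then $\theta^\rho(0,0)=-\inf_z\theta(z)\le 0$, because $f\ge 0$ (both $\Fa\ge0$ and $\delta\ge0$), so $\inf_z\theta(z)\ge 0$. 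On the other side, $\theta^\rho(0,0)>-\infty$ because $\theta(0)<\infty$: the triple $(\hat K,\hat K,\hat\phi)$ from (A3) is admissible. The inequality $\theta^{\rho\rho}\le\theta$ comes from the Fenchel--Young type bound $\theta^\rho(\lambda,\mu)\ge\rho(z,\lambda,\mu)-\theta(z)$, valid for every $z$. Rearranging gives $\rho(z,\lambda,\mu)-\theta^\rho(\lambda,\mu)\le\theta(z)$, and taking the supremum over $(\lambda,\mu)$ yields the claim.

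\emph{Assertion (iii).} Since $\widetilde{\Fa}(\cdot)=f(\cdot,0)$, we have $V_p=\theta(0)$. Using (ii) together with $\rho(0,\lambda,\mu)=0$ from Lemma~\ref{lemma:C-1-2}\,(i),
$$V_d=\sup Q=\sup\{-\theta^\rho\}=\sup\{\rho(0,\lambda,\mu)-\theta^\rho(\lambda,\mu)\}=\theta^{\rho\rho}(0)\le\theta(0)=V_p.$$

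\emph{Assertion (i).} This is the main obstacle. Let $z_k\to0$ in $(L^2(\Omega))^2$, and pass to a subsequence with $\theta(z_k)\to\liminf_k\theta(z_k)=:c<\infty$ (if $c=\infty$ there is nothing to prove). Choose near-minimizers $(\K_k,K_k,\phi_k)\in\Gamma(z_k)$ with $\Fa(\K_k,K_k,\phi_k)\le\theta(z_k)+1/k$. Boundedness of $\Fa$ then bounds $\R$: $\K_k$ and $K_k$ are bounded in $H^1(\Omega)$, and $\phi_k$ is bounded in $L^{4n}(\Omega)\cap\bv(\Omega)$.

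By Rellich compactness on the bounded interval (A1) and BV compactness, after a further subsequence we get $\K_k,K_k\to\K,K$ strongly in $L^2$ and weakly in $H^1$, and $\phi_k\to\phi$ in $L^1$ and a.e., weakly in $L^{4n}$. The constraints pass to the limit. First, $M(\K_k,K_k)=\K_k-K_k\to \K-K$ in $L^2$, and also $\to0$, so $M(\K,K)=0$. Second, $\W(\phi_k)\to\W(\phi)$ a.e.; since $\W(\phi_k)\to0$ in $L^2$, a subsequence converges a.e. to $0$, hence $\W(\phi)=0$. Thus $(\K,K,\phi)\in\Gamma(0)$.

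It remains to check lower semicontinuity of $\Fa$ along this sequence. The norms in $\R$ are weakly lsc, and $|\cdot|_\bv$ is lsc under $L^1$ convergence. The data term converges by continuity of $B$ (Proposition~\ref{th:B-continuos}), using $K_k\to K$ in $L^2$ and $\phi_k\to\phi$ in $L^1$; Young's inequality gives $\|K*\phi\|_{L^2}\le\|K\|_{L^2}\|\phi\|_{L^1}$. Hence
$$\theta(0)\le\Fa(\K,K,\phi)\le\liminf_k\Fa(\K_k,K_k,\phi_k)=c,$$
which proves (i).

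The delicate point is identifying $\W(\phi)=0$ in the limit despite only weak convergence in $L^{4n}$. The a.e. convergence supplied by BV compactness is what resolves it.
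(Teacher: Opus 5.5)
Your proof is correct. For (ii) and (iii) you follow the paper: interchanging the infima gives $\theta^\rho=-Q$, the Fenchel--Young bound gives $\theta^{\rho\rho}\le\theta$, and $\rho(0,\cdot)=0$ yields $V_d=\theta^{\rho\rho}(0)\le\theta(0)=V_p$. You are only more explicit, for instance in exhibiting $(0,0)\in dom(\theta^\rho)$ from $\theta\ge 0$ and $\theta(0)<\infty$, the latter via (A3).

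The real difference is in (i). The paper calls the lower semicontinuity of $\theta$ at $0$ a ``direct consequence'' of Definition~\ref{def:exact-penalty}. That definition does not yield it. Worse, lsc of $\theta$ at $0$ is exactly the hypothesis the paper later feeds into \cite[Theorem~3]{Melo2010} to \emph{obtain} the exact penalty representation in Theorem~\ref{theo:exact-penalty}, so that justification is circular.

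Your argument supplies the missing content with the direct method:
\begin{itemize}
\item Coercivity of $\R$ bounds $\K_k,K_k$ in $H^1(\Omega)$ and $\phi_k$ in $L^{4n}(\Omega)\cap\bv(\Omega)$.
\item Rellich and $\bv$-compactness give strong $L^2$ convergence of the kernels and $L^1$/a.e.\ convergence of $\phi_k$.
\item The constraints pass to the limit: $\K=K$ from the $L^2$ limits, and $\W(\phi)=0$ from a.e.\ convergence, which sidesteps the lack of weak continuity of $\W$.
\item $\Fa$ is lsc along the sequence. The penalty terms are weakly lsc, and the data term is continuous by Young's inequality.
\end{itemize}
For the data term, $L^1$ convergence of $\phi_k$ suffices. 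That is in fact all the proof of Proposition~\ref{th:B-continuos} uses, even though the proposition is stated with $L^{4n}$ convergence.

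The paper's route buys brevity but leaves (i) unproved. Yours is self-contained, uses only tools already in the appendix, and is what actually makes the later appeal to \cite{Melo2010} legitimate.
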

\begin{proof}
Item i) and ii) are direct consequences of Definition~\ref{def:exact-penalty} and Definition~\ref{def:conjugate}, respectively. To prove item iii), first observe that $\rho(0,\lambda,\mu) = 0$ from Lemma~\ref{lemma:C-1-2}~i). Utilizing this fact in Definition~\ref{def:conjugate}, and item~ii), we can deduce that $\theta^{\rho\rho}(0) = V_d \leq \theta(0) = V_p$.
\end{proof}

We finally check the last supporting result before continuing to demonstrate the primary theorem of this subsection.
\begin{lemma} \label{lemma:seven}
Let $\rho$ and $\theta$ as in Definition~\ref{def:exact-penalty}. Furthermore, consider the $\varepsilon$-subgradient as in Definition~\ref{def:conjugate}. Then:
\begin{itemize}
\item[a)] If $(\lambda,\mu_0) \in \partial_{\rho,\varepsilon} \theta(0)$, then $(\lambda,\mu) \in \partial_{\rho,\varepsilon} \theta(0)$, for every $\mu \ge \mu_0$.

\item[b)] For all $\varepsilon > 0$ it holds $\partial_{\rho,\varepsilon} \theta(0)
\not= \emptyset$.

\item[c)] Let $(\bar\lambda,\bar\mu) \in \, dom(\theta^\rho)$ be given. For every
$\varepsilon > 0$ there exists a $\mu_0 = \mu_0(\varepsilon)$ such that
$(\bar\lambda,\mu) \in \ \partial_{\rho,\varepsilon} \theta(0)$, for all $\mu \ge \mu_0$.

\item[d)] Let $(\bar\lambda,\bar\mu) \in dom(\theta^\rho)$ be given. There exists
$$ \theta(z) \ \geq \ \theta(0) - \langle \bar{\lambda} , z \rangle
   - \hat{\mu} \norm{z}_{L^2} \, , \ \forall z \in (L^2(\Omega))^2 \, . 
$$
\end{itemize}

\end{lemma}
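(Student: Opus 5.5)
The plan is to work directly from the definition of $\varepsilon$-abstract subgradient. Throughout I use the identities $\rho(0,\lambda,\mu)=0$ (Lemma~\ref{lemma:C-1-2}~i)) and $\theta^\rho(\lambda,\mu)=-Q(\lambda,\mu)$ (Lemma~\ref{lemma:six}~ii)). Since $(\lambda,\mu)\in\partial_{\rho,\varepsilon}\theta(0)$ means $\theta(z)-\rho(z,\lambda,\mu)\ge\theta(0)-\varepsilon$ for all $z$, this membership is equivalent to
\[
-\theta^\rho(\lambda,\mu)=Q(\lambda,\mu)\ \ge\ \theta(0)-\varepsilon .
\]
I also note that $\theta(0)=V_p<\infty$, because by \textbf{(A3)} the triple $(\hat K,\hat K,\hat\phi)$ is admissible, and that $\theta\ge 0$ since $\Fa\ge0$.

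\emph{Part a).} I would take $\mu\ge\mu_0$. By Lemma~\ref{lemma:C-1-2}~ii), $\rho(z,\lambda,\cdot)$ is decreasing, so $-\rho(z,\lambda,\mu)\ge-\rho(z,\lambda,\mu_0)$ for every $z$. Adding $\theta(z)$ to both sides preserves the subgradient inequality.

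\emph{Part c).} This is the core, and b) follows from it. Fix $(\bar\lambda,\bar\mu)\in dom(\theta^\rho)$, so $\theta(z)-\rho(z,\bar\lambda,\bar\mu)\ge -c$ for all $z$, where $c:=\theta^\rho(\bar\lambda,\bar\mu)<\infty$. Given $\varepsilon>0$, the first step is to use the lower semicontinuity of $\theta$ at $0$ (Lemma~\ref{lemma:six}~i)) to choose a neighbourhood $V$ of $0$ on which
\[
\theta(z)\ge\theta(0)-\varepsilon/2 \quad\text{and}\quad -\rho(z,\bar\lambda,\mu)=\langle\bar\lambda,z\rangle+\mu\|z\|\ge -\varepsilon/2 \ \text{ for all }\mu\ge0 .
\]
The second inequality only requires $V$ small, since $|\langle\bar\lambda,z\rangle|\le\|\bar\lambda\|\|z\|$. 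On $V$ this yields $\theta(z)-\rho(z,\bar\lambda,\mu)\ge\theta(0)-\varepsilon$. On $V^C$ I would write
\[
\theta(z)-\rho(z,\bar\lambda,\mu)=\big[\theta(z)-\rho(z,\bar\lambda,\bar\mu)\big]+(\mu-\bar\mu)\|z\|\ \ge\ -c+A^V_{\bar\lambda,\bar\mu}(\mu).
\]
By Lemma~\ref{lemma:C-1-2}~iii)b) the right-hand side exceeds $\theta(0)-\varepsilon$ once $\mu\ge\mu_0(\varepsilon)$. Combining the two regions gives c).

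\emph{Part b).} Lemma~\ref{lemma:six}~ii) gives $dom(\theta^\rho)\neq\emptyset$; explicitly, $(0,0)$ lies in it with $\theta^\rho(0,0)=-\inf\theta\le0$. Applying c) to that element shows $\partial_{\rho,\varepsilon}\theta(0)\neq\emptyset$.

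\emph{Part d).} This is the delicate step. The statement asks for some $\hat\mu$ giving the inequality with $\varepsilon=0$, i.e.\ $(\bar\lambda,\hat\mu)\in\partial_{\rho,0}\theta(0)$. My plan is to split again into $V$ and $V^C$. On $V^C$ the argument of c) works verbatim with $\varepsilon=0$ for $\hat\mu$ large. On $V$, lsc only gives $\varepsilon$-closeness, so I need a local exact bound of the form $\theta(z)\ge\theta(0)-C\|z\|$ near $0$. I would try to obtain it from the structure of $\Gamma(z)$. Given $(\K,K,\phi)\in\Gamma(z)$ with $\|z\|$ small, I would build a nearby admissible triple by replacing $\K$ with $K$ and projecting $\phi$ pointwise onto $\{0,1\}$ (rounding). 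Then I would estimate the change in $\Fa$ by a multiple of $\|z\|$, using the continuity of $B$ (Proposition~\ref{th:B-continuos}) and the fact that $|\W(\phi)|$ controls the distance of $\phi$ to $\{0,1\}$ (Lemma~\ref{lemma:calK-Ppc}~ii)). This calmness estimate is the main obstacle, especially because the $BV$ and $H^1$ terms must be controlled under rounding. If it fails in full generality, the fallback is to combine c) with a diagonal argument over $\varepsilon\to0$, or to accept the $\varepsilon$-version as the content of d).
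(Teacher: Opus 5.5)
Your parts a)--c) are correct. Part a) is the paper's monotonicity argument. For b) and c) the paper only checks hypotheses and invokes \cite[Theorem~5.2.1]{Melo09}. You instead prove c) directly: lower semicontinuity of $\theta$ at $0$ handles a small ball $V$ of radius $r$, and on $V^C$ you use the splitting $\theta(z)-\rho(z,\bar\lambda,\mu)=[\theta(z)-\rho(z,\bar\lambda,\bar\mu)]+(\mu-\bar\mu)\norm{z}\ge-\theta^\rho(\bar\lambda,\bar\mu)+(\mu-\bar\mu)r$. You then get b) from the explicit element $(0,0)\in dom(\theta^\rho)$, using $\theta\ge0$. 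This is a valid, self-contained substitute for the citation, and it makes visible that $\mu_0(\varepsilon)$ grows like $1/r(\varepsilon)$.

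The gap is d), which you do not prove. Since $Q(\bar\lambda,\hat\mu)=\inf_z\{\theta(z)-\rho(z,\bar\lambda,\hat\mu)\}$, d) is equivalent to $Q(\bar\lambda,\hat\mu)=\theta(0)$. Near $z=0$ this amounts to a calmness bound $\theta(z)\ge\theta(0)-C\norm{z}$, which lower semicontinuity cannot give. You diagnose this correctly, but the rounding estimate you propose cannot be carried out as stated. Only for $n=1$ does $|\W(\phi)|$ control $\mathrm{dist}(\phi,\{0,1\})$ linearly, and even then the $BV$ and $L^{4n}$ terms under rounding would still have to be handled. For $n\ge2$ the wells are zeros of order $n$, so $\mathrm{dist}(\phi,\{0,1\})$ is only of order $|\W(\phi)|^{1/n}$. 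In fact, for $n\ge2$ the inequality in d) fails in general. Take a constrained minimizer $(K_m,K_m,\phi_m)$ and set $\phi_\eta=\phi_m\pm\eta^{1/n}$. Then $z=(0,\W(\phi_\eta))$ has $\norm{z}=O(\eta)$, and the $BV$ and $H^1$ terms are unchanged. Meanwhile $\Fa$ changes by $\pm g\,\eta^{1/n}+O(\eta^{2/n})$, where $g=2\langle K_m\ast\phi_m-y^\delta,\,K_m\ast1\rangle+\alpha\beta_2\,\tfrac{d}{dt}\norm[L^{4n}]{\phi_m+t}^2\big|_{t=0}$. Unless $g=0$, one sign gives $\theta(z)\le\theta(0)-\tfrac12|g|\eta^{1/n}$ for small $\eta$. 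No term $(\norm{\bar\lambda}+\hat\mu)\norm{z}$ can compensate this as $\eta\to0$. Your fallbacks do not rescue the statement. A diagonal argument yields no single $\hat\mu$, because $\mu_0(\varepsilon)$ may diverge, and by this example it must. The $\varepsilon$-version is just c). The paper's own proof of d) has the same hole. It sets $\hat\mu:=\max\{1,\norm{\bar\lambda}\}$ and appeals to \eqref{eq:13}, but \eqref{eq:13} only gives $\theta(z)\ge\theta(0)-\varepsilon-\langle\bar\lambda,z\rangle-\mu_\varepsilon\norm{z}$, and the $-\varepsilon$ is dropped without justification.
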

\begin{proof}
The assertion of item~a) is a consequence of the definition of the $\epsilon$-abstract
subgradient $\partial_{\rho,\varepsilon} \theta (\cdot)$, in addition to
Lemma~\ref{lemma:C-1-2} i) and ii).

Lemma~\ref{lemma:six}-i) and ii) and Lemma~\ref{lemma:C-1-2} guarantee that the requirements
of \cite[Theorem~5.2.1]{Melo09} (see also \cite{Melo2010}) are fulfilled, thus confirming
the validity of items~b) and c).

Finally, observe that item a) implies the existence of an element $(\bar\lambda,\bar\mu)$
in the domain of $\theta^\rho$. Define
$\hat{\mu} := \max \{ 1, \norm{\bar{\lambda}}\}$.
Thus, the estimate $-\langle \bar{\lambda}, z \rangle - \hat{\mu} \norm{z} \leq
(\norm{\bar{\lambda}} - \hat{\mu}) \, \norm{z} \leq 0$ holds true for all $z \in (L^2({\Omega}))^2$.

Given $\varepsilon > 0$, we can deduce from the Definition~\ref{def:conjugate} of $\varepsilon$-subgradient, the assertion in item~c) of this lemma, the definition of $\rho$, and Lemma~\ref{lemma:C-1-2}-i) that
\begin{align}\label{eq:13}
\theta(z) - \rho(z,\overline{\lambda}, \mu_\varepsilon)  \geq \theta(0) - \rho(0,\overline{\lambda}, \mu_\varepsilon) - \varepsilon = \theta(0) - \varepsilon \qquad \forall z \in (L^2(\Omega))^2.
\end{align}

From Lemma~\ref{lemma:C-1-2}-ii) and \eqref{eq:13}, it follows that $\theta(z) \geq \theta(0) - \langle \bar{\lambda} , z \rangle - \hat{\mu}\norm{z}$ for all $z \in (L^2(\Omega))^2$, which concludes the proof.
\end{proof}

Next, we prove the main result of this subsection: the zero-duality gap property for the proposed augmented Lagrangian approach. It should be noted that, as a result of  Lemma~\ref{lemma:six}-iii), this property is equivalent to $\theta^{\rho\rho}(0) = \theta(0)$.

\begin{theo} \label{theo:exact-penalty}
Let the main assumptions~\textbf{A1)} - \textbf{A3)} hold. Consider $\Fa$, $\W$ and $M$ be defined as above.
\begin{itemize} 
\item[(i)] There exists $\bar{\lambda} \in (L^2(\Omega))^2$ supporting an exact penalty representation in the sense of Definition~\ref{def:exact-penalty}. 

\item[(ii)] There is no duality gap, i.e., $\theta^{\rho\rho}(0) = V_d = V_p=\theta(0)$.
\end{itemize}
\end{theo}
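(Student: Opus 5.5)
The plan is to deduce (i) from the growth estimate in Lemma~\ref{lemma:seven}~d), using the standard exact-penalty characterization for sharp Lagrangians (Rockafellar--Wets, Theorem~11.61; see also \cite{Melo2010}). Item (ii) then follows from (i) together with weak duality.

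\textbf{Step 1.} Fix $(\bar\lambda,\bar\mu)\in dom(\theta^\rho)$. This set is nonempty by Lemma~\ref{lemma:six}~ii). Lemma~\ref{lemma:seven}~d) then provides $\hat\mu$ such that
\[
\theta(z)\ \ge\ \theta(0)-\langle\bar\lambda,z\rangle-\hat\mu\norm{z}
\qquad \text{for all } z\in (L^2(\Omega))^2 .
\]
In terms of the coupling function this reads $\theta(z)-\rho(z,\bar\lambda,\hat\mu)\ge\theta(0)$ for all $z$. Taking the infimum over $z$ and using $Q(\lambda,\mu)=\inf_z\{\theta(z)-\rho(z,\lambda,\mu)\}$ gives $Q(\bar\lambda,\hat\mu)\ge\theta(0)$. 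By Lemma~\ref{lemma:C-1-2}~ii), $Q(\bar\lambda,\cdot)$ is nondecreasing in $\mu$. Weak duality gives $Q\le V_p=\theta(0)$. Together these yield $Q(\bar\lambda,\mu)=\theta(0)$ for every $\mu\ge\hat\mu$. Set $\bar\mu:=\hat\mu$.

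\textbf{Step 2: equality of argmins for $\mu>\bar\mu$.} Note that $\La(\cdot;\bar\lambda,\mu)=\Fa$ on $\Gamma(0)$. Hence any minimizer of $\widetilde{\Fa}$ has $\La$-value $\theta(0)=Q(\bar\lambda,\mu)$, so it minimizes $\La$.

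For the converse, let $w=(\K,K,\phi)$ minimize $\La(\cdot;\bar\lambda,\mu)$, and put $z:=(M(\K,K),\W(\phi))$. Then
\[
\theta(0)=\La(w)=\Fa(w)+\langle\bar\lambda,z\rangle+\mu\norm{z}
\ \ge\ \theta(z)+\langle\bar\lambda,z\rangle+\hat\mu\norm{z}+(\mu-\hat\mu)\norm{z}
\ \ge\ \theta(0)+(\mu-\hat\mu)\norm{z}.
\]
Since $\mu>\hat\mu$, this forces $z=0$. Therefore $w\in\Gamma(0)$ and $\Fa(w)=\theta(0)$, so $w$ minimizes $\widetilde{\Fa}$. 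This proves (i).

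\textbf{Step 3: item (ii).} From Step~1, $V_d\ge Q(\bar\lambda,\mu)=\theta(0)=V_p$. Combined with Lemma~\ref{lemma:six}~iii), this gives $V_d=V_p$. The identity $\theta^{\rho\rho}(0)=\sup_{\lambda,\mu}\{-\theta^\rho(\lambda,\mu)\}=\sup Q=V_d$ then follows from Lemma~\ref{lemma:six}~ii) and $\rho(0,\cdot,\cdot)=0$.

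\textbf{Main obstacle.} The delicate point is Lemma~\ref{lemma:seven}~d), namely the uniform linear-plus-norm lower bound on $\theta$. It depends on the $\varepsilon$-subgradient existence from \cite{Melo09}, which needs $\theta$ to be lsc at $0$ and finite there. Finiteness of $\theta(0)$ follows from assumption \textbf{A3)}: the triple $(\hat K,\hat K,\hat\phi)$ is admissible, so $\theta(0)<\infty$.

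A further subtlety is that Step~2 requires \emph{strict} $\mu>\bar\mu$ to conclude $z=0$; this matches Definition~\ref{def:exact-penalty}. If nonemptiness of the argmin is also desired, I would obtain it from the existence argument of Remark~\ref{rm:existence} applied on $\Gamma(0)$. That in turn uses the weak closedness of $\{\W(\phi)=0\}$ under $\bv$-compactness and $L^1$ convergence.
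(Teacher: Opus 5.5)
Your proof has the same skeleton as the paper's. Both reduce (i) to the growth bound of Lemma~\ref{lemma:seven}~d), and both obtain (ii) from (i) plus weak duality. The difference is in the step from that bound to the exact penalty representation. The paper cites \cite[Theorem~3]{Melo2010}; your Steps~1--2 prove the implication directly, and correctly. The bound with $\hat\mu$ is precisely the statement $Q(\bar\lambda,\hat\mu)=\theta(0)$. For a minimizer of $\La(\cdot;\bar\lambda,\mu)$, your chain $\theta(0)\ge\theta(0)+(\mu-\hat\mu)\norm{z}$ forces feasibility once $\mu>\hat\mu$. This makes visible that only the bound and the finiteness of $\theta(0)$ are used in this direction; the semicontinuity properties listed in the paper's proof play no role. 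Your Step~3, $V_d\ge Q(\bar\lambda,\mu)=\theta(0)=V_p$, is also the correct form of the paper's closing display, which as printed asserts $V_p=\theta^{\rho\rho}(0)$ and $\theta(0)=V_d$ before either has been shown.

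Your closing diagnosis of Lemma~\ref{lemma:seven}~d) is, however, mistaken. That lemma carries all the content of (i), since it is equivalent to $Q(\bar\lambda,\hat\mu)=\theta(0)$ for some $\hat\mu$.

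Lower semicontinuity and finiteness of $\theta$ at $0$, through $\varepsilon$-subgradients, only give the following: for each $\varepsilon>0$ there is some $\mu_\varepsilon$ with $\theta(z)\ge\theta(0)-\varepsilon-\langle\bar\lambda,z\rangle-\mu_\varepsilon\norm{z}$ for all $z$. Since $\mu_\varepsilon$ may diverge as $\varepsilon\to0$, this yields $V_d=V_p$ but not a uniform $\hat\mu$. So (ii) in fact follows from Lemma~\ref{lemma:seven}~c) alone, without d). The paper's proof of d) has exactly this defect: it passes from \eqref{eq:13}, which carries $-\varepsilon$ and an $\varepsilon$-dependent $\mu_\varepsilon$, to a bound with neither.

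A uniform bound is a calmness property of $\theta$ at $0$, and here it can genuinely fail. Take $n\ge2$ and a constrained minimizer $\phi^*$. Replace its value $1$ by $1\mp t$ on a white interval $S$ between two bars. This changes $\Fa$ at first order in $t$ (the $\bv$ term alone changes by $\mp2\alpha\beta_3 t$), whereas $\norm{\W(\phi)}=O(t^n)$. So unless the first variation of $\Fa$ along $\chi_S$ vanishes, no pair $\bar\lambda,\hat\mu$ works.

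Thus your proof is as complete as the paper's, but no more. Both rest on d), which is an extra calmness hypothesis rather than a consequence of lower semicontinuity.
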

\begin{proof}
From Assumption~\textbf{A3)}, we conclude that the set of primal solutions is not empty.
This implies that $\theta(0) < +\infty$ (see Definition~\ref{def:vector-pc}, item~6).
Additionally, Lemma~\ref{lemma:six}-i) guarantees that the perturbation function $\theta$
is lower semi continuous at $z=0$; consequently, $f(\K,K,\phi, \cdot)$ is also lower semi
continuous at $z=0$ for every $(\K,K,\phi) \in X$ (see Definition~\ref{def:vector-pc}, item~5).
Moreover, Lemma~\ref{lemma:C-1-2}-i) and ii) implies that the coupling function
$\rho(\cdot, \lambda, \mu)$ is upper semi continuous at $z=0$, monotonically decreasing
and satisfies $\rho(0, \lambda, \mu) =0$ (see Definition~\ref{def:vector-pc}, item-6).


On the other hand, Lemma~\ref{lemma:seven}~d) imply the existence of a $\hat{\mu} > 0$ satisfying
$$ \theta(z) \ \geq \ \theta(0) - \langle \bar{\lambda} , z \rangle
   - \hat{\mu} \norm{z}_{L^2} \, , \ \forall z \in (L^2(\Omega))^2 \, .
$$ 
Thus, arguing with \cite[Theorem~3]{Melo2010} we conclude that Assertion~$(i)$ holds true.

A consequence of Assertion~$(i)$ is that
$$
V_p = \theta^{\rho\rho}(0) =  Q(\bar{\lambda},\mu) \le \textstyle\sup_{(\lambda,\mu)} Q(\lambda,\mu) = \theta(0) = V_d \, .
$$
This inequality combined with the weak duality property \eqref{eq:weak-duality} allow us
to conclude that Assertion~$(ii)$ holds.
\end{proof}

\subsection{Convergence analysis for the double well (PCLS) approach}
\label{subsec:doble-well-convergence-analysis}

In this subsection, we investigate regularization properties of the minimizers
of the augmented Lagrangian approach proposed in this manuscript (see
Theorem~\ref{th:admissible-pcls}).
The proofs of these results are analogous to the ones presented in
\cite[Theorems~4, 5, 6]{Agnelli2018}; for the convenience of the reader,
only a sketch is presented.

\begin{theo} \label{th:admissible-pcls}

Under the assumptions of Theorem~\ref{theo:exact-penalty}, for any $\alpha > 0$,
we have:
\begin{itemize}
\item[i)] The functional $\widetilde{\Fa}$ attains minimizer on the set of admissible
functions satisfying the constraints $\W(\phi) =0$ and $M(\K,K)=0$.

\item[ii)] Problem \eqref{eq:inv-probl-copt} has a solution in the set of admissible
functions.

\item[iii)] Let $\bar{\lambda}_\alpha$ supporting an exact penalty representation
and $\mu_\alpha > \bar{\mu}_\alpha$ as in Definition~\ref{def:exact-penalty} (the
existence follows from Theorem~\ref{theo:exact-penalty}). 
Then, the augmented Lagrangian $\La(\cdot; \bar{\lambda}_\alpha, \mu_\alpha)$ attains
a minimizer on the set of admissible functions. 

\item[iv)] A solution of the problem \eqref{eq:inv-probl-copt} can be obtained by
solving the unconstrained optimization problem $\min_{(\K,K,\phi)}
\La(\K,K,\phi;\bar{\lambda}_\alpha,\mu_\alpha)$. 

\item[v)] [Convergence] \label{th:convergence} Assume that we have exact data,
i.e. $\delta = 0$ in \eqref{eq:noise}. 
For every $\alpha > 0$ let $(\K_\alpha, K_\alpha, \phi_\alpha$) be a corresponding
minimizer of $\La(\cdot; \bar{\lambda}_\alpha, \mu_{\alpha})$ in the set of
admissible functions (the existence is guaranteed by item~(iii) above). 
Then, for every sequence of positive numbers $\{\alpha_j\}$ converging to zero,
the corresponding sequence $\{\K_{\alpha_j}, K_{\alpha_j}, \phi_{\alpha_j}\}$ of
minima of $\mathcal{L}_{\alpha_j}(\cdot; \bar{\lambda}_{\alpha_j}, \mu_{\alpha_j})$
has a subsequence (which we denote by the same index)
$\{\K_{\alpha_j},K_{\alpha_j},\phi_{\alpha_j}\}$ that is strongly convergent in
$(L^2(\Omega))^2 \times L^p(\Omega)$, for $1 \leq p$. The limit of
$\{\K_{\alpha_j}, K_{\alpha_j}, \phi_{\alpha_j}\}$ is an admissible function.
Moreover, it is a solution of \eqref{eq:inv-probl-fpc} with $y^\delta = y$.

\item[vi)][Stability] \label{th:conv-stabil} 
Let $\alpha = \alpha(\delta)$ be a positive function such that $\lim_{\delta \to 0}
\alpha(\delta) = 0$ and $\lim_{\delta \to 0} \delta^2 / \alpha(\delta) = 0$.
Moreover, let $\{ \delta_j \}$ be a sequence of positive numbers converging to zero
and $\{ y^{\delta_j} \} \in Y$ be the corresponding noisy data satisfying
\eqref{eq:noise}. Then, there exists a subsequence, denoted again by
$\{ \delta_j \}$, and a sequence $\{ \alpha_j := \alpha(\delta_j) \}$
such that the corresponding minimizers $(\K_{\alpha_j}, K_{\alpha_j},\phi_{\alpha_j})$
of  $\Lak(\cdot; \bar{\lambda}_{\alpha_j}, \mu_{\alpha_j})$ converges in
$(L^2(\Omega))^2 \times L^p(\Omega)$, for $1\leq p$ to a solution
of \eqref{eq:inv-probl-fpc} with $y^\delta = y$.
\end{itemize}
\end{theo}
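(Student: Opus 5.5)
The plan is the standard Tikhonov-type scheme of \cite{Agnelli2018}: direct method of the calculus of variations for existence, the exact penalty property of Theorem~\ref{theo:exact-penalty} to transfer results between the constrained problem and the augmented Lagrangian, and weak-compactness/lower-semicontinuity arguments for convergence and stability.

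\emph{Items (i)--(ii).} We apply the direct method. By \textbf{(A3)}, the admissible set is nonempty and $\widetilde{\Fa}$ is finite somewhere; take the point $(\hat K,\hat K,\hat\phi)$. Let $(\K_k,K_k,\phi_k)$ be a minimizing sequence. Then $\alpha\R$ bounds $\K_k,K_k$ in $H^1(\Omega)$, and $\phi_k$ in $L^{4n}(\Omega)\cap\bv(\Omega)$. Rellich compactness in 1D and compactness of $\bv\hookrightarrow L^1$ give a subsequence with $\K_k,K_k\to\K,K$ strongly in $L^2$ and $\phi_k\to\phi$ strongly in $L^1$ and a.e. Interpolation with the uniform $L^{4n}$ bound yields strong convergence in $L^p$ for $p<4n$.

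We then check that the constraints are preserved. The constraint $\K=K$ passes to the limit trivially. For $\W(\phi_k)=0$, the values $\phi_k\in\{0,1\}$ a.e. (Lemma~\ref{lemma:calK-Ppc}(ii)) persist under a.e. convergence, so $\W(\phi)=0$.

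Lower semicontinuity follows from three facts: weak lsc of the $H^1$ norm, lsc of $|\cdot|_\bv$ in $L^1$, and weak lsc of the $L^{4n}$ norm. The fidelity term converges by continuity of $B$ (Proposition~\ref{th:B-continuos}), since $K_k\ast\phi_k\to K\ast\phi$ in $L^2$ when $K_k\to K$ in $L^2$ and $\phi_k\to\phi$ in $L^1$ on bounded $\Omega$. Item (ii) is just (i) restated for \eqref{eq:inv-probl-copt}.

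\emph{Items (iii)--(iv).} These are immediate from Definition~\ref{def:exact-penalty}. With $\bar\lambda_\alpha$ and $\mu_\alpha>\bar\mu_\alpha$ from Theorem~\ref{theo:exact-penalty}, we have $\arg\min\La(\cdot;\bar\lambda_\alpha,\mu_\alpha)=\arg\min\widetilde{\Fa}$. The right-hand set is nonempty by (i). Hence the Lagrangian attains its minimum, and every minimizer is admissible and solves \eqref{eq:inv-probl-copt}.

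\emph{Items (v)--(vi).} Since the minimizers are admissible, $\La$ equals $\Fa$ there. Compare with $(\hat K,\hat K,\hat\phi)$ via $Q(\bar\lambda_\alpha,\mu_\alpha)=\theta(0)$:
\[
\|B(K_\alpha,\phi_\alpha)-y^\delta\|^2+\alpha\R(\K_\alpha,K_\alpha,\phi_\alpha)\le \delta^2+\alpha\R(\hat K,\hat K,\hat\phi).
\]
This requires $\hat K\in H^1(\Omega)$, which holds since the truncated Gaussian is smooth on $\Omega$. Dividing by $\alpha$ and using $\delta^2/\alpha\to0$ (or $\delta=0$) gives a uniform bound on $\R$. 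The same compactness as in (i) yields a subsequence converging in $(L^2)^2\times L^p$, with admissible limit $(\bar\K,\bar K,\bar\phi)$.

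Meanwhile the residual is at most $\delta^2+\alpha C\to0$. Continuity of $B$ then gives $B(\bar K,\bar\phi)=y$, and $\W(\bar\phi)=0$. So the limit solves \eqref{eq:inv-probl-fpc} with $y^\delta=y$.

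\emph{Main obstacle.} The delicate point is that $\bar\lambda_\alpha,\mu_\alpha$ depend on $\alpha$ (and on $y^\delta$). The argument must therefore use only the exact-penalty identity, and never any bound on the multipliers. A secondary issue is the range of $p$. Strong $L^p$ convergence holds only for $p<4n$ in general; however, since all $\phi_k$ take values in $\{0,1\}$, they are uniformly bounded in $L^\infty$, which gives convergence for every $1\le p<\infty$. I would use this to justify the statement for all $1\le p$.
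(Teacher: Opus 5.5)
Your proposal is correct and follows the same route as the paper's sketch: the direct method for (i)--(ii) (as in Lemma~3 of \cite{Agnelli2018}, with continuity of $B$ from Proposition~\ref{th:B-continuos}), the equality of argmin sets in Definition~\ref{def:exact-penalty} for (iii)--(iv), and comparison with the admissible true solution plus compactness for (v)--(vi), as in \cite[Theorems~5 and~6]{Agnelli2018}. Your write-up also fills in points the sketch leaves implicit: you pass $\W(\phi)=0$ to the limit via strong $L^1$ and a.e.\ convergence (weak convergence alone would not suffice), you check that $\hat K\in H^1(\Omega)$, and you note that binary $\phi$ gives convergence for every finite $p\ge 1$.
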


\noindent \textit{Sketch of the Proof:}
%
Assertion~$i)$ follows similarly to Lemma~3 in \cite{Agnelli2018},
where the continuity of $B(\cdot, \cdot)$ is given by Proposition~\ref{th:B-continuos}, presented in the appendix.
Assertion~$ii)$ follows immediately from Assertion~i) and the definition of
$\widetilde{\Fa}$ (see Definition~\ref{def:vector-pc}). Assertion~$iii)$
follows from Assertion~i) and~\eqref{eq:def-exact-penalty}.
Assertion~$iv)$ follows from Theorem~\ref{theo:exact-penalty} and
Assertions~$ii)$ and $iii)$. The proofs of Assertions~$v)$ and $vi)$ follow
the lines of \cite[Theorems~5 and~6]{Agnelli2018}, respectively.

\section{Numerical experiments}\label{sec:ap-bar-codes}

In this section, we investigate the performance and limitations of the augmented Lagrangian
PCLS regularization method introduced in Section~\ref{sec:level-set-formulation} for solving
the semi-blind deconvolution problem arrising in barcode decoding. To this end, we introduce
an ADMM-type algorithm \cite{BPCPE11} to compute a primal-dual solution for the augmented
Lagrangian
\begin{align} \label{eq:augmented-lagragian-special}
\tilde{L}_\alpha(\tilde{\sigma}, \sigma,\phi;\lambda,\mu)   := 
& \| B(K(\sigma),\phi) - y^\delta \|^2_{L^2(\Omega)}  \nonumber\\
&+ \alpha ( \beta_1 (\norm{\K(\tilde{\sigma}) - K_0}^2_{H^1(\Omega)} + \norm{K(\sigma) - K_0}^2_{H^1(\Omega)}) + \beta_2\ \| \phi \|^2_{L^{4n}} + \beta_3 | \phi |_\bv ) \notag\\
& + \langle \lambda_1 \, , M(\K(\tilde{\sigma}),K(\sigma)) \rangle_{L^2(\Omega)} + \langle \lambda_2 \, ,  \W(\phi) \rangle_{L^2(\Omega)} \nonumber \\
& + \mu ( \beta_4 \| M(\K(\tilde{\sigma}), K(\sigma)) \|_{L^2(\Omega)} + \| \W(\phi) \|_{L^2(\Omega)} ) \, .
\end{align}
Here, $\K(\tilde{\sigma}) := \K(\tilde{\sigma})(x) = \gamma(\tilde{\sigma} \sqrt{2\pi})^{-1}
e^{-(x^2/(2\tilde{\sigma}^2))} \chi_{\Omega}(x)$, $K(\sigma):=K(\sigma)(x) =
\gamma(\sigma \sqrt{2\pi})^{-1} e^{-(x^2/(2\sigma^2))}$ $\chi_{\Omega}(x)$,
$K_0 = K_0(x)$ is the \textit{a priori} in \eqref{eq:def-R-tilde}, and $\alpha$, $\beta_i$
are positive constants.

\subsection{An algorithm for the augmented Lagrangian PCLS regularization method}
\label{ssec:alg}

In what follows, we detail the primal-dual iterative algorithm employed in our numerical
experiments. Let $\sigma_0$, $\tilde{\sigma}_0 \in \mathbb{R}_+$, $\phi_0 \in L^2(\Omega)$,
$\lambda_0 = ((\lambda_1)_0,(\lambda_2)_0) \in (L^2(\Omega))^2$ and $\mu_0 \in \mathbb{R}_+$
be given.

\noindent\rule{\linewidth}{0.4pt} \\[-0.2ex]
{\bf Algorithm~1} \\[-1.5ex]
\noindent\rule{\linewidth}{0.4pt} \\[-4ex]
\begin{itemize}
\item[1] Update primal variables $(\tilde{\sigma}_k, \sigma_k, \phi_k)$ 
   \begin{itemize}
   \item[1.1] $ \phi_{k+1} \in  arg\min\limits_{\phi} \tilde{L}_\alpha(\tilde{\sigma}_k ,\sigma_k,\phi;\lambda_k,\mu_k)$
   \item[1.2] $(\tilde{\sigma}_{k+1}, \sigma_{k+1}) \in  arg\min\limits_{(\tilde{\sigma}, \sigma)}
   \tilde{L}_\alpha(\tilde{\sigma} , \sigma,\phi_{k+1};\lambda_{k},\mu_k)$
   \end{itemize}

\item[2] Update dual variables $(\lambda_k, \mu_k)$
   \begin{itemize}
   \item[2.1] $(\lambda_1)_{k+1} = (\lambda_1)_{k} + a_\lambda \,
   M(K(\sigma_{k+1}), \K(\tilde{\sigma}_{k+1}))$ \\[-1.5ex]
   %
   %
   %
   \item[2.2] $(\lambda_2)_{k+1} = (\lambda_2)_{k} +\frac{a_\lambda}{\|\W(\phi_{k+1})\|^2+\eps} \, \W(\phi_{k+1})$
   %
   \item[2.3] $\mu_{k+1} = \mu_k + \frac{a_\mu}{(\|\W(\phi_{k+1})\|
   + \beta_4 \|M( K(\sigma_{k+1}), \K(\tilde{\sigma}_{k+1}))\|)^2 + \eps} \,
   (\|\W(\phi_{k+1})\| + \beta_4 \|M( K(\sigma_{k+1}), \K(\tilde{\sigma}_{k+1}))\|)$
   \end{itemize}
\end{itemize}
\mbox{} \\[-4ex]
\noindent\rule{\linewidth}{0.4pt} \\[-1.5ex]

\noindent
where $a_\lambda$, $a_\mu$, and $\eps$ are positive constants ($\eps$ small). Algorithm~1 is terminated when the
constraints $\|\W(\phi_k)\|=0$ and $\|M(\K(\tilde{\sigma}_k), K(\sigma_k)) \|= 0$ are
simultaneously satisfied within a prescribed accuracy.

\begin{remark}[Update of the primal and dual variables in Algorithm~1] \mbox{}
\\
$\bullet$ Update of the primal variables $(\tilde{\sigma}, \sigma, \phi)$: To solve the
optimization problems in Step~1 of Algorithm~1, we proceed as follows:%
\footnote{Both Steps~1.1 and~1.2 are implemented using the Python library SciPy \cite{VirSciPy20}
with its default parameters.}

In Step 1.1, the BFGS method \cite{BLNZ95} is employed to solve the minimization problem with
respect to $\phi$. We use the limited-memory variant, referred to as \textit{Limited-memory BFGS}
(L-BFGS-B),%
\footnote{See \href{https://docs.scipy.org/doc/scipy/reference/generated/scipy.optimize.minimize.html}
{https://docs.scipy.org/doc/scipy/reference/generated/scipy.optimize.minimize.html}}
which exploits the a priori information that the barcode values $\phi$ are constrained to lie
in $[0,1]$.

In Step~1.2, the CG method \cite{HS52} is used to solve the minimization problem with respect
to $(\tilde{\sigma}, \sigma)$.%
\footnote{See \href{https://docs.scipy.org/doc/scipy/reference/generated/scipy.sparse.linalg.cg.html}
{https://docs.scipy.org/doc/scipy/reference/generated/scipy.sparse.linalg.cg.html}}
\\
$\bullet$ Update of the dual variables $(\lambda_1, \lambda_2, \mu)$: 

In Step~2.1, we use a gradient method with contant stepsize $a_\lambda$ to update $\lambda_1$.

In Steps~2.2 and~2.3, to avoid slow convergence (close to the solution) of the gradient method,
we use a variant of the normalized gradient method  \cite{Shor12} (see also \cite[Chapter~3.3.4]{Poly10}
to update $\lambda_2$
and $\mu$ respectively. A small positive constant $\eps$ is added in the denominator to enhance
stability and to limit the step size when the gradient becomes small.
\end{remark}

\subsection{Numerical setup for the barcode recovering problem}

The ground-truth barcode $\hat{u}$ used in our numerical experiments is depicted in
Figure~\ref{fig:Data01} (TOP). This barcode is the same as that employed in the
numerical simulations presented in \cite{E04}.
Following the ansatzt in~\eqref{eq:def-pcls}, $\hat{u}$ is modeled by a function
$\hat{\phi}: [-1, 1] \rightarrow \mathbb{R}$, where $\hat{\phi}(x)=0$ corresponds to the
black bars and $\hat{\phi}(x)=1$ corresponds to the white spaces between the bars,
for all $x \in [-1, 1]$; see Figure~\ref{fig:Data01} (BOTTOM).

As explained in the Introduction, the barcode recovery problem consists in
simultaneously identifying the pair $(\hat u, \hat\sigma)$ from noisy data.
In each experiment, the barcode $\hat{u}$ is corrupted by a convolution
with the PSF kernel $K$ defined in \eqref{def:PSF}, using a known standard deviation
$\hat\sigma$, followed by the addition of uniformly distributed noise satisfying
\eqref{eq:noise}. To illustrate the numerical setup of this inverse problem
we show in Figure~\ref{fig:Data01} the scenario corresponding to $\hat\sigma=0.026$
and $\delta = 0.5\%$.

Three distinct variances levels ($\hat{\sigma}^2$) are considered in our experiments,
namely $\numparam{\hat\sigma=0.024}$, $\numparam{\hat\sigma=0.026}$, and
$\numparam{\hat\sigma=0.028}$.
For each of these values, two levels of relative noise are examined,
namely \numparam{$\delta = 0.5\%$} and \numparam{$\delta = 5.0\%$}
(an additional experiment with $\delta = 10\%$ is also considered;
see Figures~\ref{fig:experiment_sigma028noise10}
and~\ref{fig:error_reconstructions_noise10}).

\begin{remark}[On the physical interpretation of the experiments]
There is no universal relation of the form $d = f(\sigma)$ that
directly links the scanner-barcode distance $d$ to the standard deviation
$\sigma$ without introducing an additional physical model or a
calibration procedure.

The model adopted in \eqref{def:PSF} relies on the qualitative assumption 
$\sigma = \kappa d L$, where $L$ denotes the barcode width and $\kappa$
is a system-dependent constant (physically, the constant $\kappa$
is directly related to the angular divergence of the laser beam).
Datasheets of commercial laser modules and low-cost 1D barcode
scanners (e.g., Zebra Technologies / Symbol and Datalogic devices
commonly used in supermarkets) indicate that the total angular
divergence of the laser beam typically ranges between 0.004
and 0.015 radians. Consequently, calibrated values of $\kappa$ are
generally expected to lie on the order of $10^{-3}$.

In our setting, the computational domain is normalized to $\Omega = (-1,1)$.
We assume a typical barcode width of $L = 3$cm (e.g., an EAN-13 code, which
is widely used in retail environments), and further assume that representative
values of $\kappa$ for commercial (low-cost) barcode scanners (e.g., supermarket
scanners) lie within the range: $\kappa \in (0.002, 0.003)$.
Therefore, for low-cost scanners, the range $\sigma \in (0.024, 0.028)$
considered in our experiments corresponds to a physical scanner-barcode distance
of approximately $d \in (24\text{cm}, 42\text{cm})$.
\end{remark}

\begin{figure}[!t]
\centering    
\includegraphics[width=\textwidth]{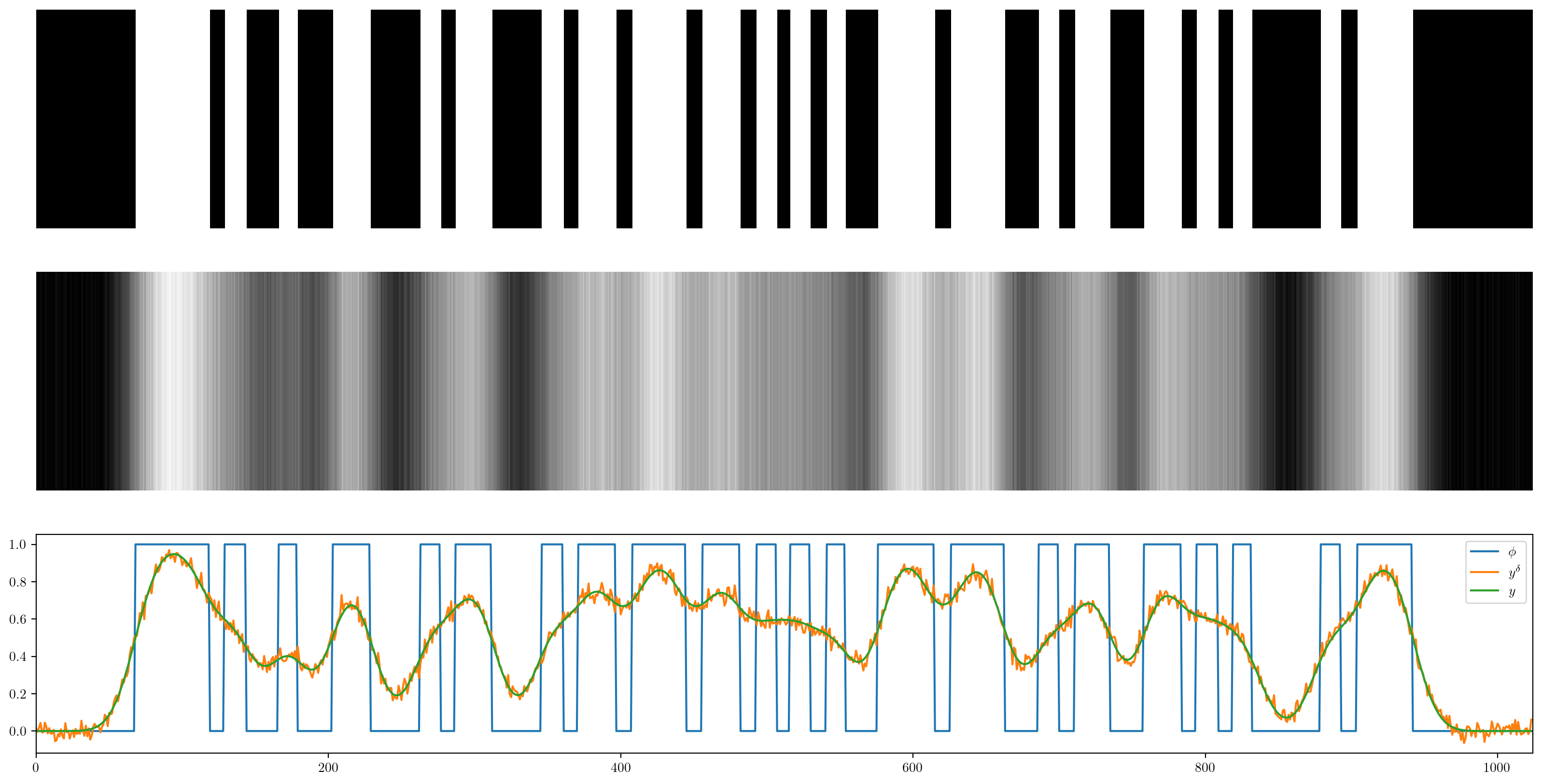}
\caption{\small Numerical setup for the barcode recovery problem (\numparam{$\hat\sigma=0.026$}
and \numparam{$\delta=0.5\%$}): (TOP) ground-truth barcode $\hat u$; (CENTER) observed signal
obtained from exact barcode by convolution with a Gaussian kernel followed by the
addition of noise; (BOTTOM) functions $\hat\phi$ (modeling the barcode $\hat u$),
$y$ (exact data obtained by convolving $\hat\phi$ with the PSF kernel) and
$y^\delta$ (noisy data).}
\label{fig:Data01}
\end{figure}
\FloatBarrier

\subsection{Implementation aspects of Algorithm~1}

The function $\hat\phi$, modeling the ground-truth barcode $\hat{u}$, is
discretized on a uniform mesh of \numparam{1024} equally spaced points on the
interval $[-1,1]$. This same mesh is used to represent the regularized solutions
in all experiments conducted in this section.

The double-well $\W(\phi): = \phi^n(\phi-1)^n$ in \eqref{eq:inv-probl-fpc} is chosen
with \numparam{$n=2$}. In the simulations, the regularization parameter is set to
\numparam{$\alpha= 10^{-8}$}, while the scaling factors $\beta_j$ in
\eqref{eq:augmented-lagragian-special} are chosen as
\numparam{$\beta_1 = \beta_3= 10^{-8}$}, \numparam{$\beta_2 = 1$},
\numparam{$\beta_4 = 0.02$}, and the constant $\eps$ is set to \numparam{$10^{-10}$}.

\begin{figure}[!t]
    \centering  \includegraphics[width=\textwidth]{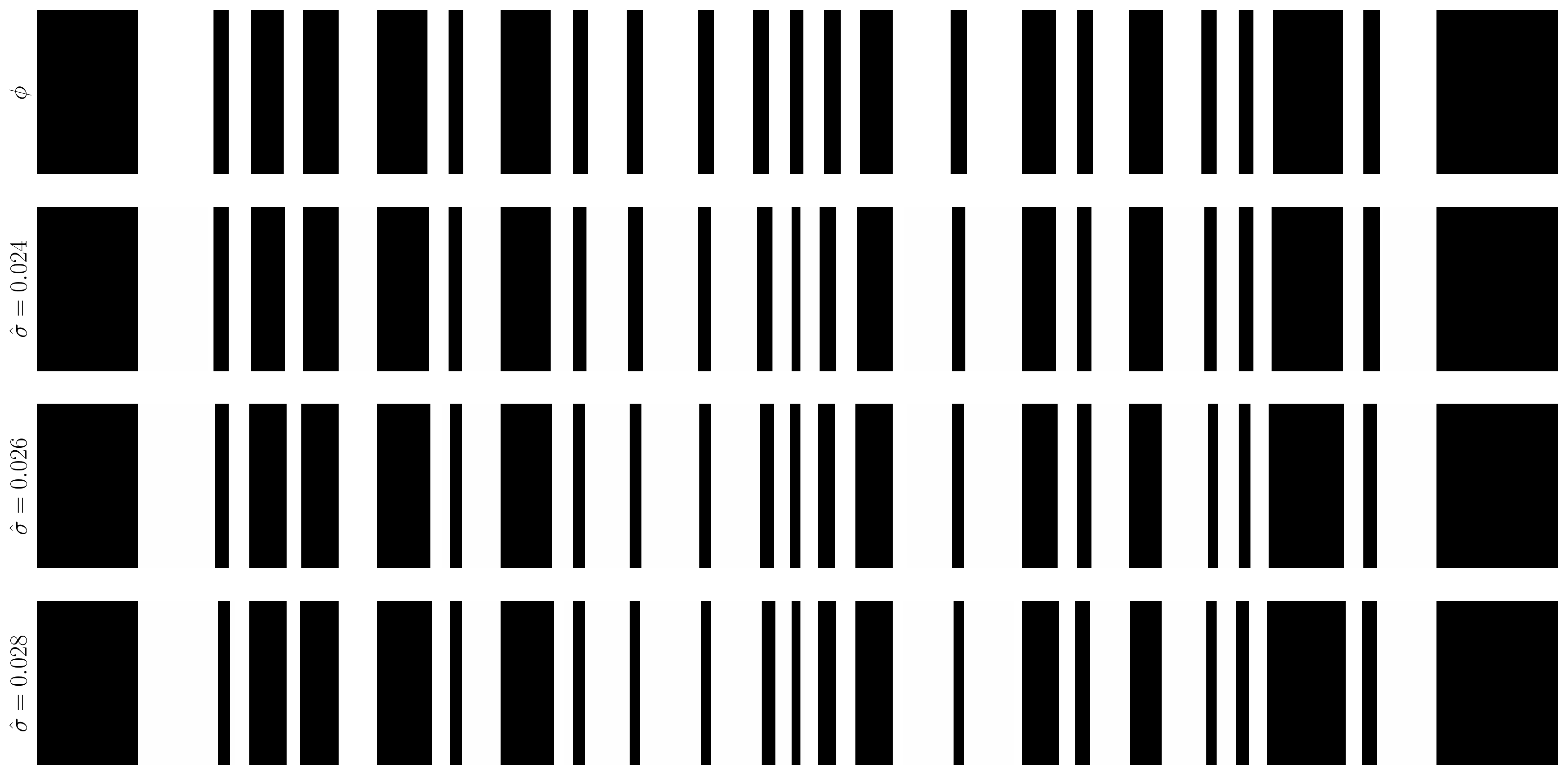}
    \caption{Reconstructed barcodes for \numparam{$\delta=0.5\%$} and different values of $\hat\sigma$.}
\label{fig:error_reconstructions_noise005}
\end{figure}
\begin{figure}[!t]
    \centering \includegraphics[width=\textwidth]{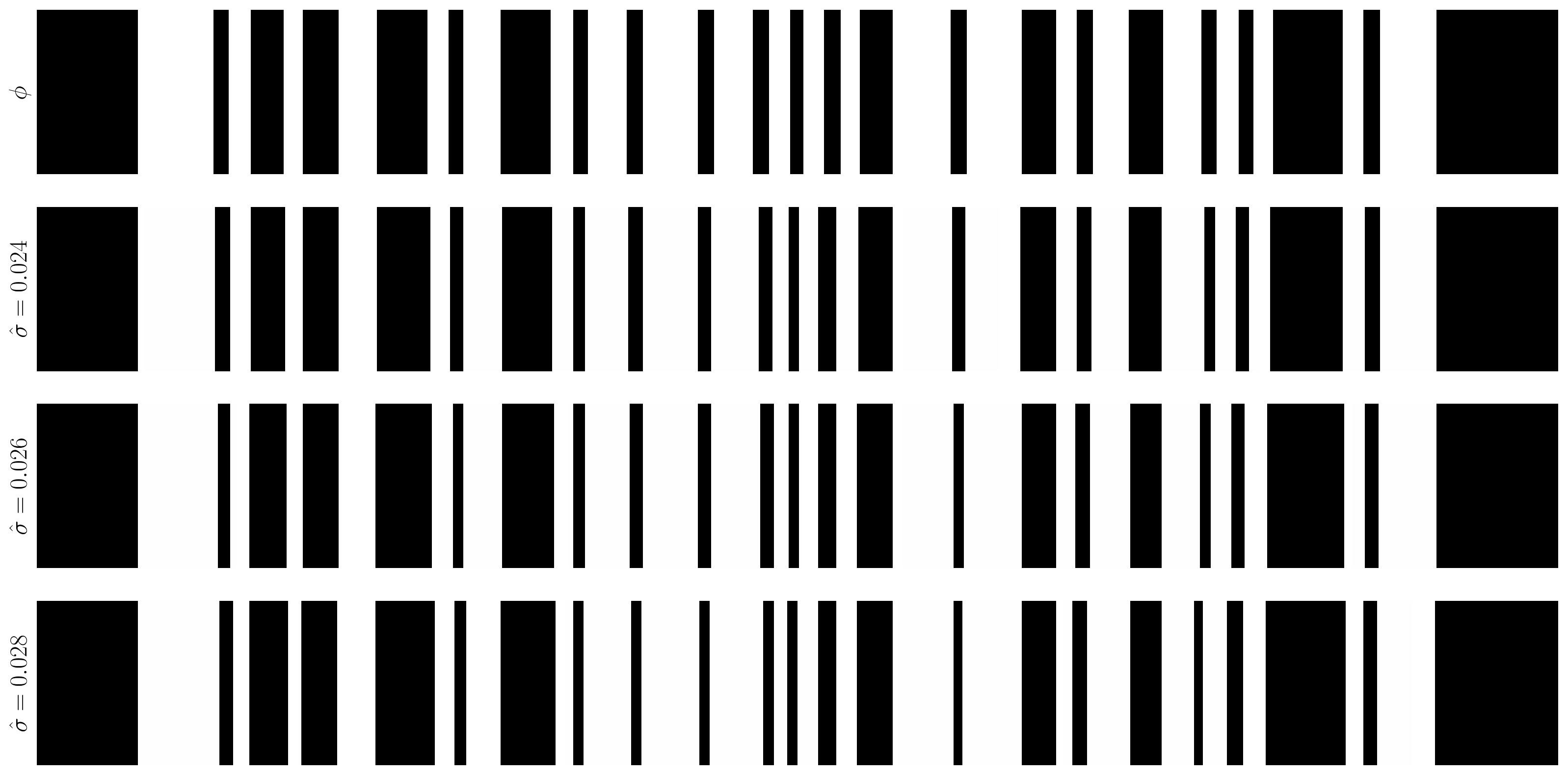}
    \caption{Reconstructed barcodes for \numparam{$\delta=5\%$} and different values of $\hat\sigma$.}
    \label{fig:error_reconstructions_noise05}
\end{figure}
\FloatBarrier

As initial guess $\phi_0$ for Algorithm~1 we set \numparam{$\phi_0(x) \equiv 0.1$},
i.e., a constant function representing an almost completely black barcode.
We assume that the amplitude of the kernel $K$ is $\gamma = 1$. The initial guess
$\sigma_0 \in \mathbb{R}_+$ for the standard deviation of the kernel
$K_0 = \overline{K}(\sigma_0)$ is set to \numparam{$\sigma_0 = 0.02$}.
The initial guess for the standard deviation associated with the slack variable
$\mathcal{K}$ is set to \numparam{$\tilde{\sigma}_0 = 10^{-3}$}, which corresponds
to a low-intensity blurring.

As initial guess for the dual variables ($\lambda$, $\mu$) of the augmented Lagrangian
we use the values \numparam{$\mu_0=0.04$} and \numparam{$\lambda_0(x)=(0,0)$}. The constants
\numparam{$a_\mu = a_\lambda=0.0004$} are used in the implementation of Step~2.

As stopping criteria, Algorithm~1 is terminated when both conditions
$$\|\W(\phi_k)\|_{L^1(\Omega)}\leq 10^{-5}\ \  {\rm and} \ \ 
\|\M(K(\sigma_{k}), \K(\widetilde{\sigma}_{k}))\|_{L^1(\Omega)}\leq 10^{-5}$$
are simultaneously satisfied.

\subsection{Numerical reconstructions}

In what follows, we discuss the performance of Algorithm~1 in Section~\ref{ssec:alg}
for the semi-blind deconvolution problem of simultaneously identifying the barcode $\hat{u}$ and
the variance $\hat{\sigma}^2$.
We test the method in three distinct scenarios corresponding to blurred barcodes with standard deviations
\numparam{$\hat{\sigma}= 0.024$}, \numparam{$\hat{\sigma}= 0.026$} and \numparam{$\hat{\sigma}=0.028$}.
In each scenario, we consider noisy measurements $y^\delta$, contaminated with noise levels
\numparam{$\delta=0.5\%$} and \numparam{$\delta=5\%$}.

The reconstruction results shown in Figure~\ref{fig:error_reconstructions_noise005}
(for $\delta = 0.5\%$) and Figure~\ref{fig:error_reconstructions_noise05} (for $\delta = 5\%$)
demonstrate the potential of Algorithm~1 to recover both the widths and the positions of
the bars and gaps in the ground-truth barcode $\hat u$, across all considered scenarios
of blur $\hat\sigma$, which ranges over the interval $\hat\sigma \in [$\numparam{$0.024$}, \numparam{$0.028$}$]$.
To complement the discussion of the results shown in these two figures,
Figure~\ref{fig:ErrorRelativeAndSigmaEvol} presents two plots that are relevant for
assessing the efficiency of Algorithm~1:
(i) Figure~\ref{fig:RelativeError} shows the evolution of the {\bf relative iteration error}
$$\|\phi_k - \hat\phi \|_{L^1} / \|\hat\phi\|_{L^1}\,, $$
for the six experiments presented in Figures~\ref{fig:error_reconstructions_noise005}
and~\ref{fig:error_reconstructions_noise05} (corresponding to three different values of
the variance $\hat\sigma^2$ and two distinct noise levels $\delta$);
(ii) Figure~\ref{fig:SigmaEvolution} displays the evolution of the iterated values
$\sigma_k$ corresponding to the same experiments examined in Figure~\ref{fig:RelativeError}.

It is important to note that Algorithm~1 is capable of producing reasonable approximations
of the ground-truth barcode after only a few iterations. To illustrate this finding, we
detail the barcode sequences generated by Algorithm~1 in two distinct scenarios:
(i) Figure~\ref{fig:Evolution_sigma024noise005}, corresponding to \numparam{$\hat{\sigma}= 0.024$}
and \numparam{$\delta = 0.5\%$}; (ii) Figure~\ref{fig:Evolution_sigma028noise05},
corresponding to \numparam{$\hat{\sigma}= 0.028$} and \numparam{$\delta = 5\%$}.
In both experiments, the algorithm yields a sufficiently accurate approximation
of the ground-truth barcode after only three iterations.

\begin{figure}[!t]
    \centering
    \begin{subfigure}{.50\textwidth}
      \centering \includegraphics[width=\linewidth]{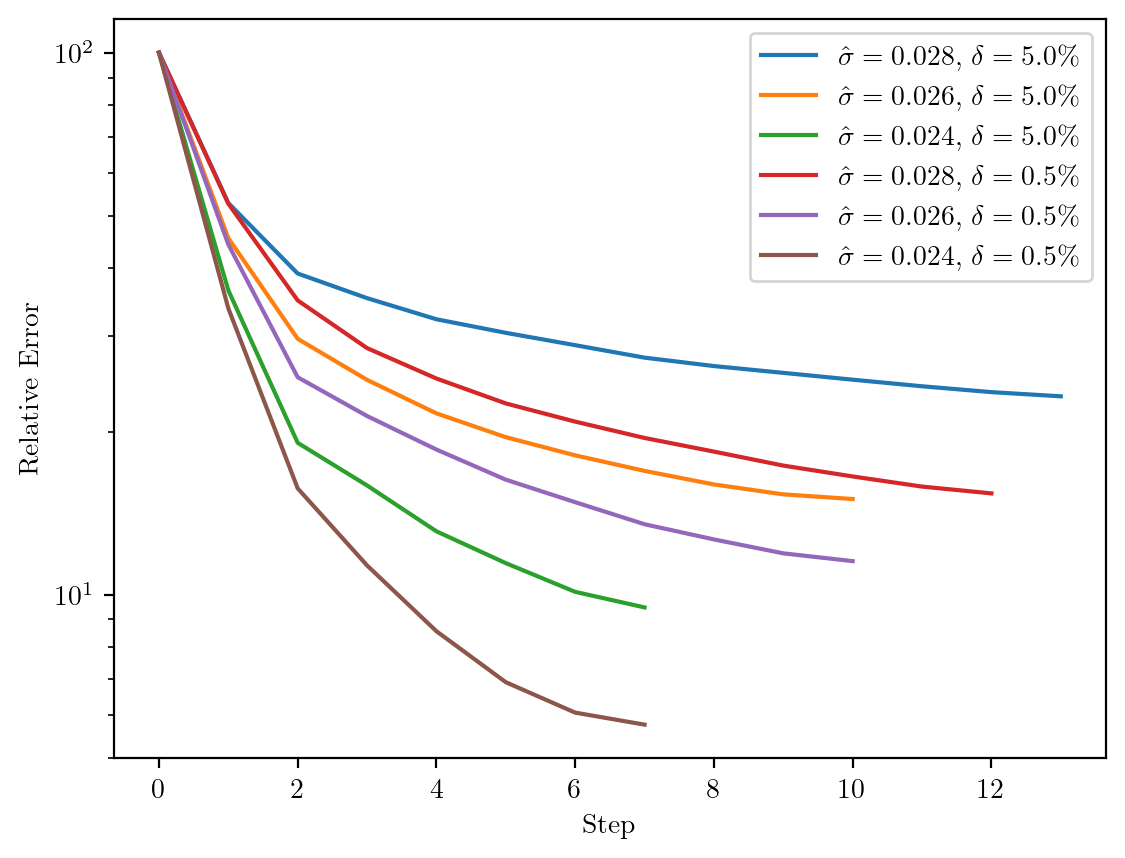}
      \caption{}
      \label{fig:RelativeError}
    \end{subfigure}%
    \begin{subfigure}{.50\textwidth}
      \centering \includegraphics[width=\linewidth]{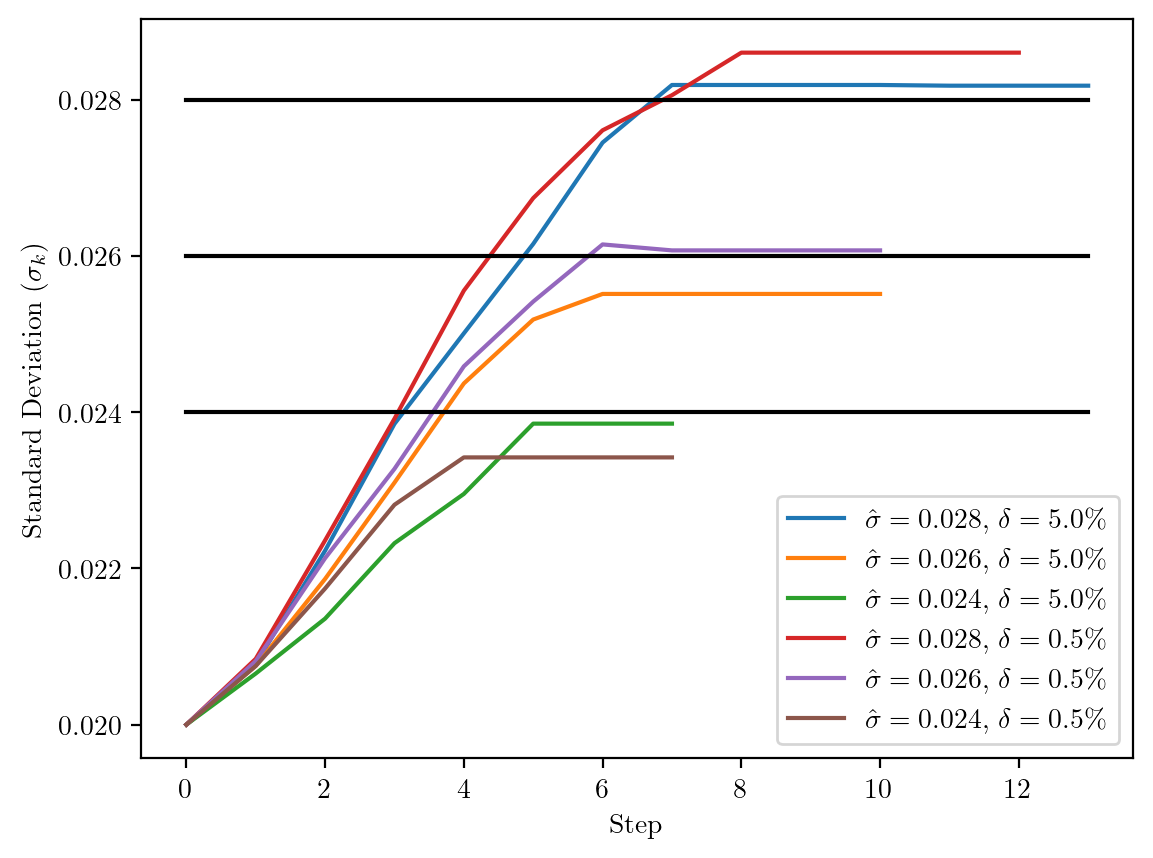}
      \caption{}
      \label{fig:SigmaEvolution}
    \end{subfigure}
    \caption{(a) Relative error ($L^1$-norm) and (b) evolution of reconstructed $\sigma_k$ for all experiments.}
    \label{fig:ErrorRelativeAndSigmaEvol}
\end{figure}
\FloatBarrier

\begin{figure}[!t]
    \centering \includegraphics[width=\textwidth]{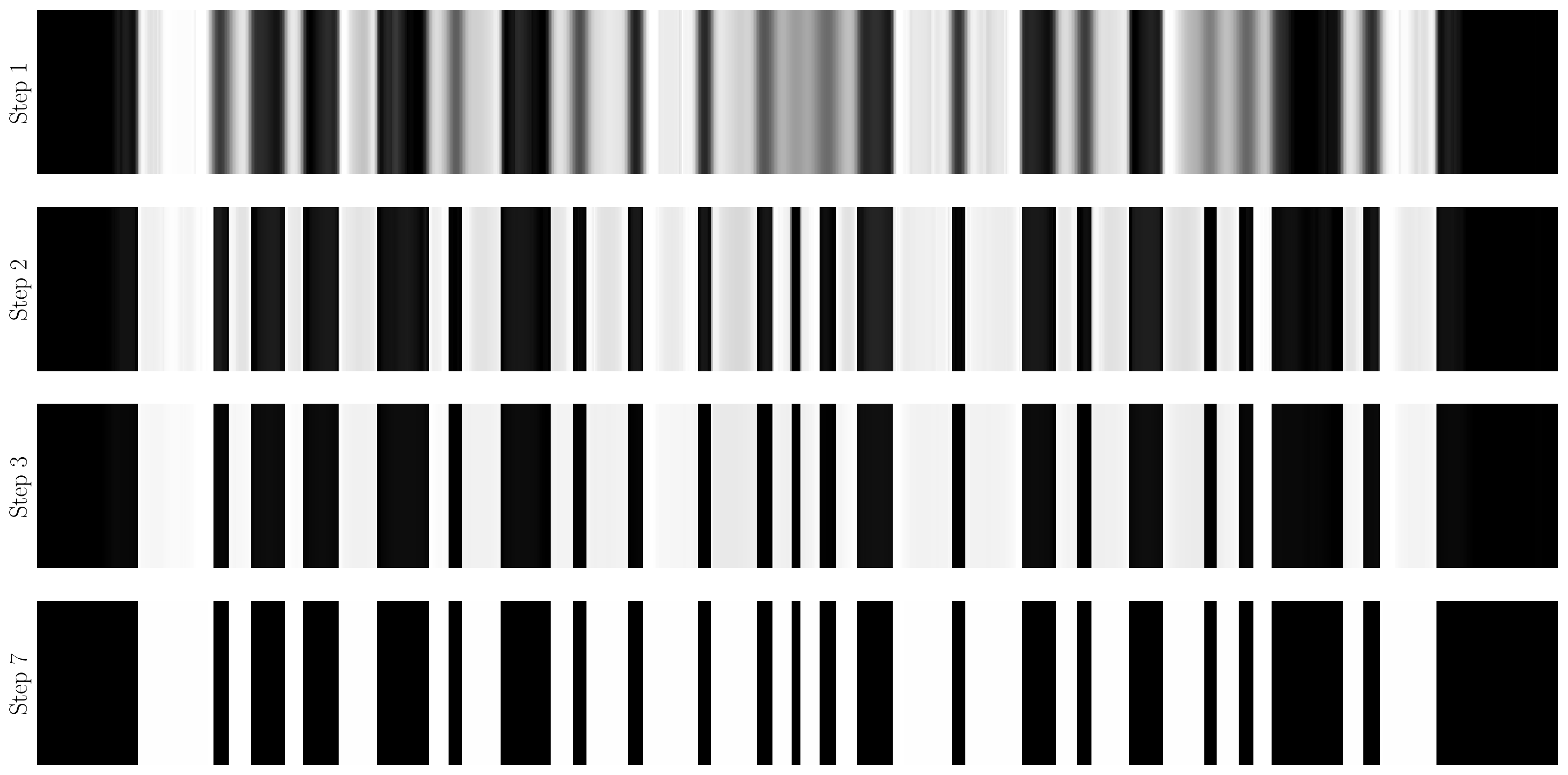}
    \caption{Evolution of reconstructed barcode for $\hat\sigma=0.024$ and $\delta=0.5\%$}
    \label{fig:Evolution_sigma024noise005}
\end{figure}
\begin{figure}[!t]
    \centering\includegraphics[width=\textwidth]{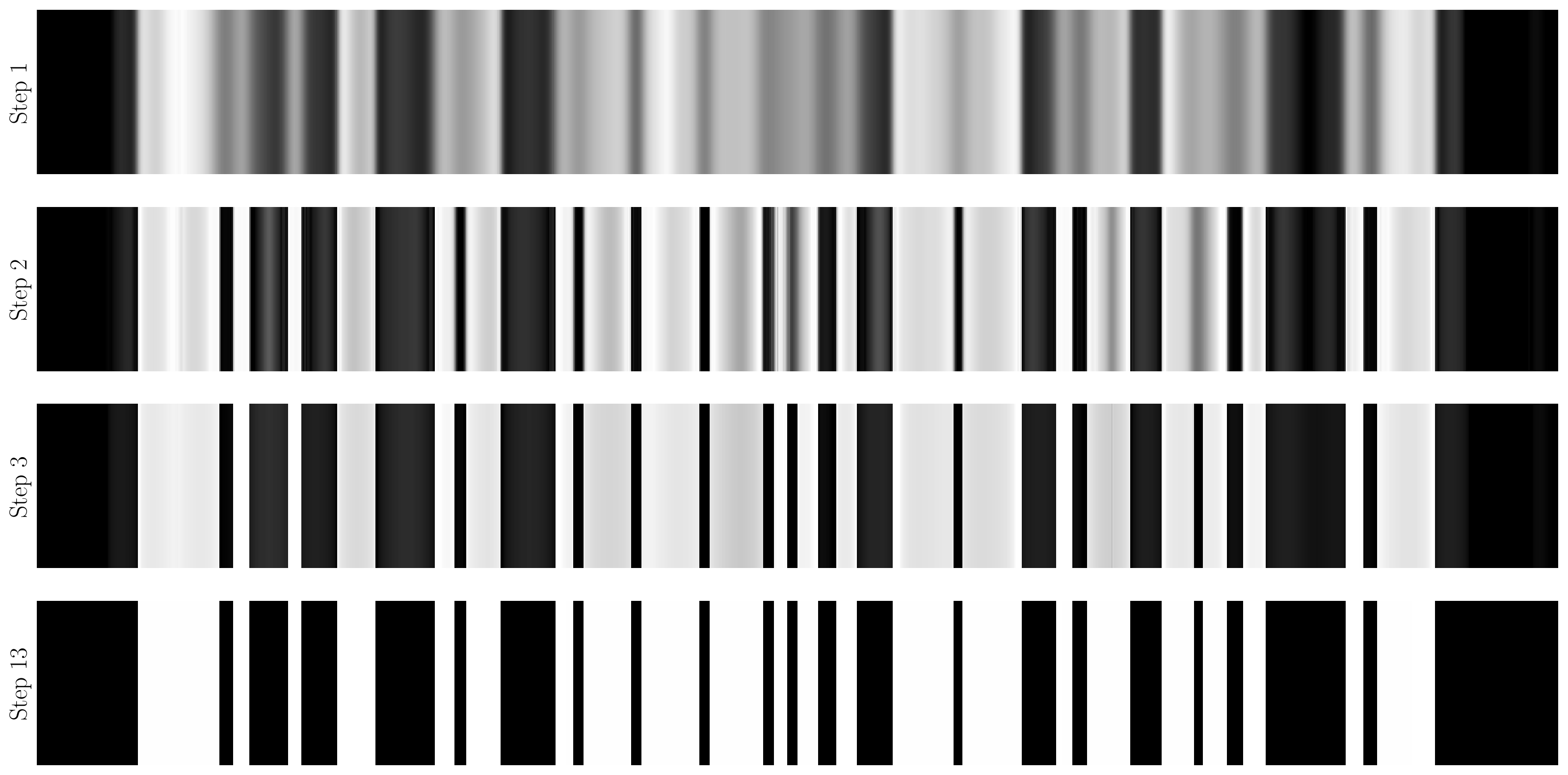}
    \caption{Evolution of reconstructed barcode for $\hat\sigma=0.028$ and $\delta=5\%$}
    \label{fig:Evolution_sigma028noise05}
\end{figure}
\FloatBarrier

In Figure~\ref{fig:experiment_sigma028noise05}, we present in detail the reconstruction results
obtained for the scenario \numparam{$\hat{\sigma}=0.028$} (large variance) and \numparam{$\delta=5\%$}
(high noise level).%
\footnote{The bottom plot in Figure~\ref{fig:Evolution_sigma028noise05} corresponds to the third
plot from the top in Figure~\ref{fig:experiment_sigma028noise05}.}
This represents the most unfavorable scenario for which Algorithm~1 was still able to
produce reasonably reliable reconstructions, without loss of bars, significant positional
shifts, or the generation of spurious bars.

We conclude this section by presenting a discussion of the main findings obtained from the
numerical experiments reported in this manuscript.
\begin{itemize}[leftmargin=*]
\item \textbf{Influence of variance values in the reconstruction:}
Different values of the blur $\sigma$ have a strong impact on the reconstruction quality.
As the standard deviation increases from \numparam{$\hat\sigma= 0.024$} to
\numparam{$\hat\sigma=0.028$} the quality of the reconstructed barcode progressively degrades,
meaning that the width and position of the reconstructed barcode bars become increasingly inaccurate. \\
For larger values of $\hat\sigma$, the thin black bars in $\hat u$ tend to become progressively
thinner and spatially displaced in the reconstructed barcode; as expected, this effect
becomes more pronounced for higher levels of noise in the data
(see Figures~\ref{fig:error_reconstructions_noise005} and~\ref{fig:error_reconstructions_noise05}
and compare the distinct reconstructions corresponding to the same values of $\hat\sigma$).

\begin{figure}[!t]
    \centering 
    \includegraphics[width=\textwidth]{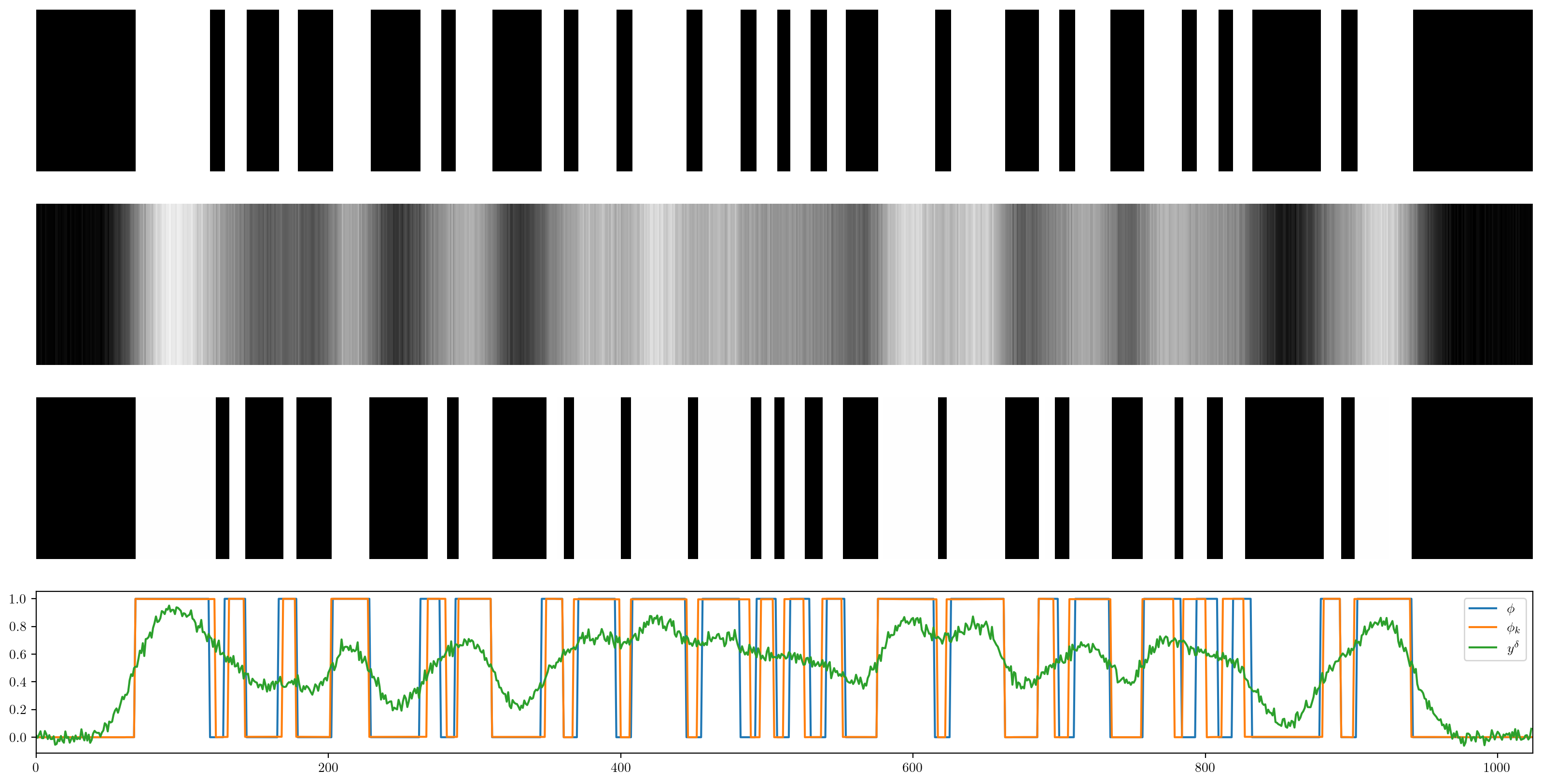}
    \caption{Scenario $\hat\sigma=0.028$ and $\delta=5\%$. From top to bottom: (1) ground-truth
    barcode $\hat u$; (2) noisy data $y^\delta$; (3) reconstructed barcode; (4) functions $\hat\phi$,
    $\phi_k$ for \numparam{$k = 7$}, and $y^\delta$}.
    \label{fig:experiment_sigma028noise05}
\end{figure}

From Figure~\ref{fig:RelativeError}, it is clear that the relative iteration error
$\|\phi_k - \hat\phi \|_{L^1} / \|\hat\phi\|_{L^1}$ attains smaller values at the end of
the iteration (when the stopping criterion is met) for smaller values of $\hat\sigma$,
indicating a higher accuracy of the reconstructed barcode.
In Figure~\ref{fig:RelativeError}, compare the BLUE and RED curves (corresponding to
the standard deviation $\hat\sigma=0.028$ for noise levels  $\delta =5.0\%$ and $\delta=0.5\%$, respectively)
with the GREEN and BROWN curves (corresponding to the standard deviation $\hat\sigma=0.024$ for noise levels 
$\delta =5.0\%$ and $\delta=0.5\%$, respectively).

For values larger than $\sigma = 0.028$, Algorithm~1 fails to produce reliable
reconstructions (independent of the noise level). In this regime, the reconstructed
barcode tends to lose bars (particularly the thinner ones) or to produce spurious bars
that are not present in the ground-truth barcode.

\item \textbf{Influence of the noise level in the reconstruction:}
Increasing the noise intensity~$\delta$ has a less pronounced effect on the quality of
the reconstructed barcode than increasing the parameter~$\sigma$. This behavior
is clearly illustrated in Figure~\ref{fig:RelativeError}.

Our experiments indicate that when noise level exceeds \numparam{$10\%$} and the
standard deviation exceeds \numparam{$\sigma=0.026$}, Algorithm~1 fails, leading to the
disappearance of bars and the emergence of spurious thin bars.
For a concrete example we present in Figure~\ref{fig:experiment_sigma028noise10} the
details of the barcode reconstruction in the scenario \numparam{$\hat{\sigma} = 0.028$}
and \numparam{$\delta = 10\%$}.
Additionally, in Figure~\ref{fig:error_reconstructions_noise10}, we show the reconstructed
barcodes for \numparam{$\delta = 10\%$} and different values of exact variance $\hat\sigma^2$.

\item \textbf{Choice of the initial guess~$\sigma_0$:} Experimental results indicate
that choosing a small value for the initial guess $\sigma_0$ leads to improved
performance of the iterative algorithm. When $\sigma_0$ is large, the iterates
$\phi_k$ tend to lose thin bars or to develop new spurious thin bars as the algorithm
progresses. This behavior has also been reported in~\cite{E04}.

\begin{figure}[!t]
    \centering 
    \includegraphics[width=\textwidth]{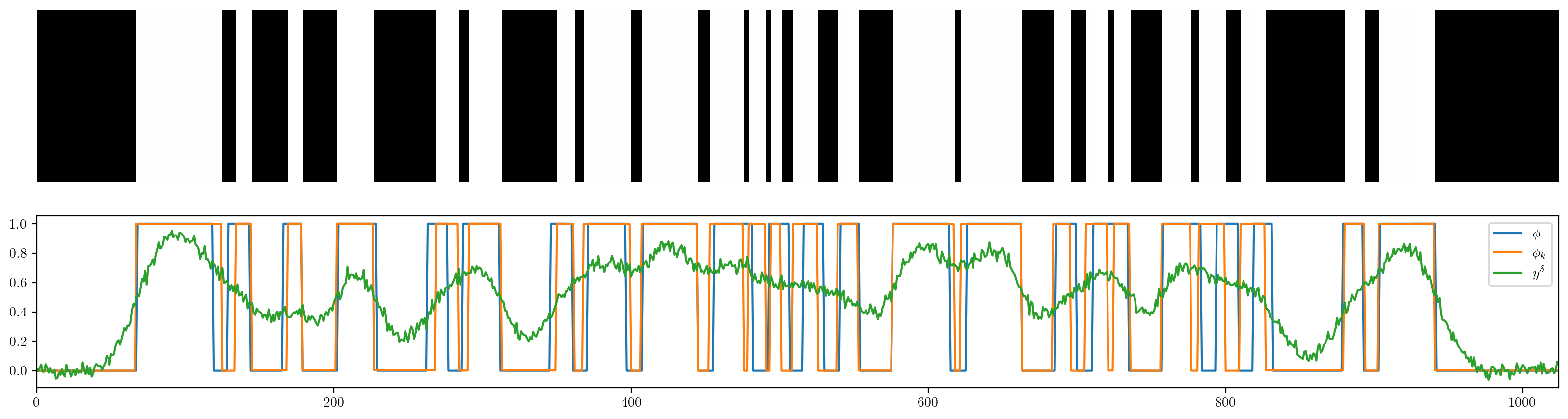}
    \caption{Scenario $\hat\sigma=0.028$ and $\delta=10\%$. (TOP) Reconstructed barcode;
    (BOTTOM) functions $\hat\phi$, $\phi_k$ for $\numparam{k=7}$ and noisy data $y^\delta$.}
    \label{fig:experiment_sigma028noise10}
\end{figure}
\FloatBarrier

\item \textbf{Fast convergence property}:
Algorithm~1 is capable of successfully the locations and widths of the ground-truth
bars and gaps at early stages of the iteration, typically between the second and third
steps. Subsequent steps are mainly devoted to enforcing that the reconstructed values
remain within the set~$\{ 0, 1 \}$.

\begin{figure}[!t]
    \centering  \includegraphics[width=\textwidth]{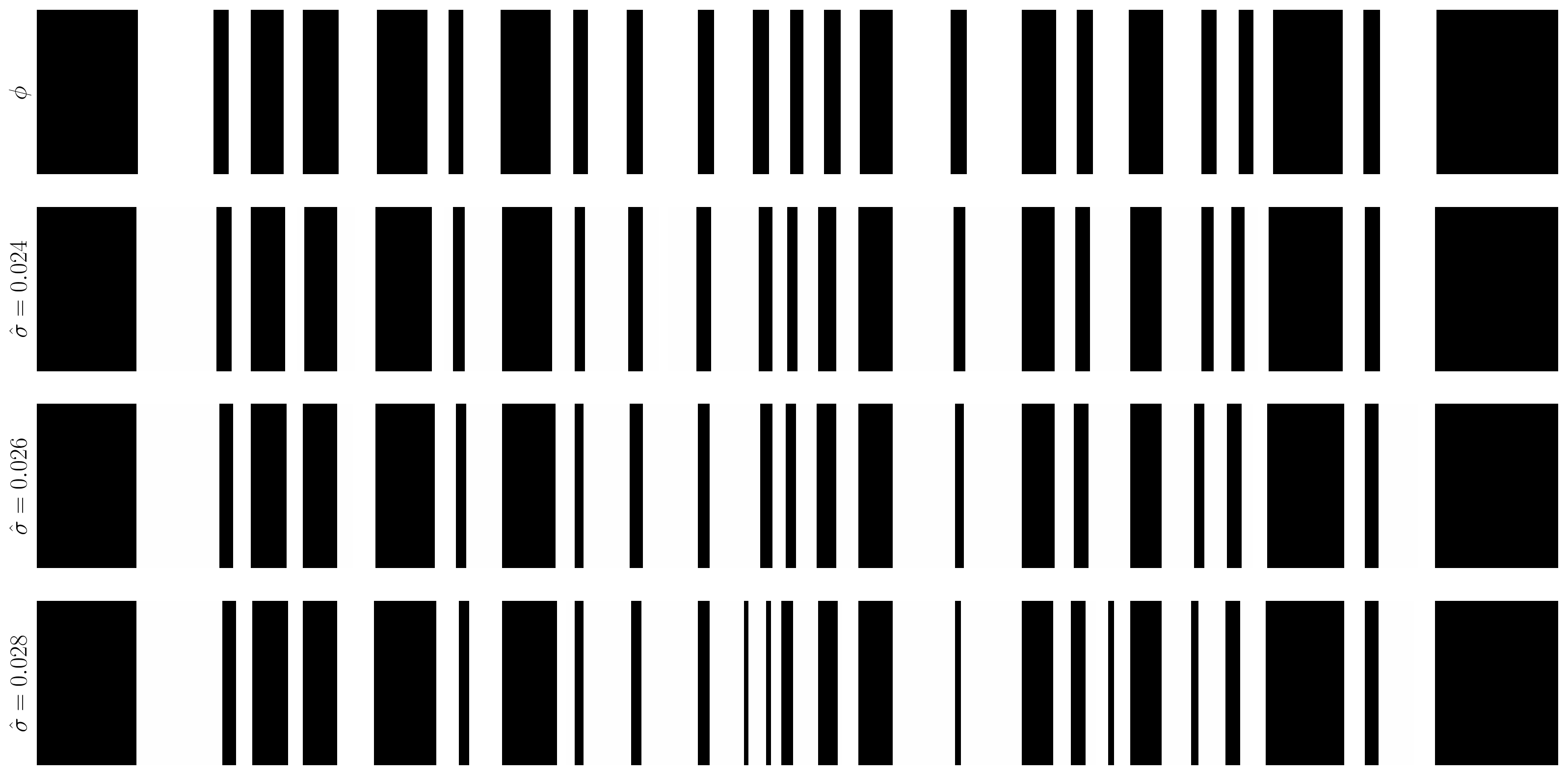}
    \caption{Barcode reconstructions for $\delta=10\%$ and different values of variance values.}
    \label{fig:error_reconstructions_noise10}
\end{figure}
\FloatBarrier

For completeness, we illustrate this rapid convergence by examining the sequences
generated by the method in two representative scenarios: (i) Figure~\ref{fig:Evolution_sigma024noise005},
corresponding to \numparam{$\hat{\sigma} = 0.024$} and \numparam{$\delta = 0.5\%$};
(ii) Figure~\ref{fig:Evolution_sigma028noise05}, corresponding to
\numparam{$\hat{\sigma} = 0.028$} and \numparam{$\delta = 5\%$}.

In the vast majority of practical applications under consideration, the primary interest
lies in the accurate identification of a binary barcode (the precise value of the variance
is typically irrelevant to the end user). A fast and efficient strategy for accomplishing
this task can be devised through the following two-step procedure:

\ \ \ \ \ (I) Perform only $k=3$ iterations of Algorithm~1, obtaining the approximation $\phi_3$
(generally \\ \mbox{}\ \ \ \ \ non-binary);

\ \ \ \ \ (II) Apply a thresholding step to $\phi_3$ in order to obtain a binary function
$\tilde\phi_3$ (e.g., $\tilde\phi_3 = 1$ if \\ \mbox{}\ \ \ \ \ $\phi_3 > 0.5$ and
$\tilde\phi_3 = 0$ otherwise).

The function $\tilde\phi_3$ is then returned as an approximation of the unknown barcode.
This strategy exploits a characteristic behavior of Algorithm~1, namely its ability to produce
reasonable (though not yet binary) approximations within only a few iterations.
It thereby avoids the computational cost associated with a large number of additional
iterations, whose primary purpose is to enforce binarity in the final reconstruction.

\item \textbf{Recovery of variance values:}
As shown in Figure~\ref{fig:SigmaEvolution}, the reconstruction of the true
variance value becomes less accurate for large values of $\hat\sigma$, close
to \numparam{$0.028$} (see the BLUE and RED curves).
For small values of $\hat\sigma$, close to \numparam{$0.024$}, the
variance reconstruction is also inaccurate (see the GREEN and BROWN curves).
The most accurate variance recovery is achieved for intermediate values of
$\hat\sigma$, around \numparam{$0.026$} (see the ORANGE and PURPLE curves).

It is worth noting that accurate reconstructions of the ground-truth barcode
can be obtained even in cases where the true standard deviation value $\hat\sigma$ is
not accurately recovered. This behavior is illustrated by the reconstructions
shown in Figures~\ref{fig:error_reconstructions_noise005}
and~\ref{fig:error_reconstructions_noise05} for the cases
\numparam{$\hat\sigma = 0.024$} and \numparam{$\hat\sigma = 0.028$}.


\item \textbf{Limitations of the proposed method}:
Despite its effectiveness under moderate noise and variance levels, Algorithm~1
exhibits certain limitations. In particular, its performance deteriorates
significantly when the variance parameter $\sigma^2$ exceeds specific thresholds,
even in low-noise scenarios. Under such conditions, the algorithm fails to produce
satisfactory reconstructions, leading to artifacts such as missing bars or the
appearance of spurious thin bars (see for example Figures~\ref{fig:experiment_sigma028noise10}
and~\ref{fig:error_reconstructions_noise10}). Based on our experiments, we find that the
reliability limit of the PCLS approach for the semi-blind deconvolution of the
barcode~$\hat{u}$ is approximately \numparam{$\sigma = 0.028$} for the standard deviation
and \numparam{$\delta = 5\%$} for the noise level. These findings are consistent
with the results reported in~\cite{E04}.
\end{itemize}

\section{Conclusions and future research}

We adopt the (PCLS) framework, considered by the authors in \cite{CLT13}, as a tool
for addressing the semi-blind deconvolution problem arising in the decoding of blurred
linear barcodes.
By parameterizing the barcode $u$ via a PCLS function $\phi$ and introducing
a slack variable associated with the partially unknown convolution kernel $K$,
we develop a solution strategy based on augmented Lagrangian techniques.

We establish several key theoretical results, most notably the existence of
generalized multipliers for the augmented Lagrangian functional under consideration
as well as the absence of duality gaps.
These findings enable us to show that the primal solutions generated by the
proposed augmented Lagrangian scheme provide stable approximate reconstructions
of the underlying barcode in the associated inverse problem, even in the presence
of noisy data.

We further conduct numerical experiments using an ADMM-type iterative algorithm,
considering a range of variance parameters (governing the degree of blurring)
and noise levels. The obtained results confirm the practical effectiveness of
the proposed method, while also illustrating its limitations.

Potential directions for future research include extending the PCLS framework
to the identification of 2D barcodes (e.g., QR code and Data Matrix),
as well as to semi-blind deconvolution problems involving non-Gaussian kernels $K$.

\section*{Acknowledgements}
ADC acknowledges support from FAPERGS (grant 123/2551-0001824-1) and CNPq (grant 401181/ 2025-1).
\\
EH  acknowledges support from DFG under Germany’s Excellence Strategy - EXC 2121
"Quantum Universe" (grant 390833306) and CNPq (grant 163469/2021-0).
\\
AL acknowledges support from CNPq (grant 307021/2025-4), FAPESC (grant 2024TR002238)
and the AvH Foundation.

\appendix 
\section*{Appendix A: Auxiliary results}

In this appendix, we recall some properties $\W$ 
in appropriated topologies that are used in the manuscript. Analogous results can be found in \cite{Agnelli2018, CLT13}. We present them here for sake of completeness. We also prove the point-wise continuity of the forward operator $B$.

First we briefly recall some basic facts about the space $\bv(\Omega)$. For a proof we refer the reader to \cite[Chapter~5]{EG92}.

\begin{lemma} \label{lemma:BV}
The following assertions hold true:
\medskip

\noindent {i)} The semi-norm $|\cdot|_\bv$ is weakly lower semi-continuous
with respect to $L^p$-convergence, i.e. if $\{x_k\} \in \bv(\Omega)$
converges to $x$ in the $L^p$-norm, then $x \in \bv(\Omega)$ and
$|x|_\bv \leq \liminf_{k\to\infty} |x_k|_\bv$.
\medskip

\noindent {ii)} $\bv(\Omega)$ is compactly embedded in $L^p(\Omega)$ for
$1 \leq p < d/(d-1)$. Consequently, any bounded sequence $\{x_k\} \in
\bv(\Omega)$ has a subsequence converging in $L^p(\Omega)$ to some
$x \in \bv(\Omega)$. \EH{d?}
\end{lemma}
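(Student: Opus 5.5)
The statement collects two standard facts, and I would prove them separately. Part~i) follows from the dual representation of the variation. Part~ii) I would prove directly in the one-dimensional setting of Assumption~\textbf{(A1)}, using Helly's selection theorem rather than the general $d$-dimensional machinery of \cite[Chapter~5]{EG92}. Throughout, $d$ denotes the space dimension, so here $d=1$ and $d/(d-1)$ is read as $+\infty$. Hence the claim in~ii) is compactness in $L^p(\Omega)$ for every $1\le p<\infty$.

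For~i), I would use
\[
|x|_\bv=\sup\Big\{\int_\Omega x\,g'\,dx \;:\; g\in C^1_c(\Omega),\ |g|\le 1\Big\}.
\]
Fix such a $g$. Since $\Omega$ is bounded, $g'\in L^{p'}(\Omega)$, so the map $x\mapsto\int_\Omega x g'$ is continuous with respect to $L^p$-convergence. Therefore $\int_\Omega x g' = \lim_k \int_\Omega x_k g' \le \liminf_k |x_k|_\bv$. Taking the supremum over $g$ gives $|x|_\bv\le\liminf_k|x_k|_\bv$, since a supremum of continuous functionals is lower semicontinuous. If this $\liminf$ is finite, then $x\in\bv(\Omega)$, because $x\in L^p(\Omega)\subset L^1(\Omega)$ on the bounded set $\Omega$. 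The membership $x\in\bv(\Omega)$ should be read under this finiteness assumption, which is the only situation in which the lemma is used.

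For~ii), let $\{x_k\}$ be bounded in $\bv(\Omega)$, with $\Omega=(a,b)$. First I would pass to the good representatives of bounded pointwise variation. Each $x_k$ then satisfies $|x_k(s)-x_k(t)|\le |x_k|_\bv$ for all $s,t$. Combined with the $L^1$ bound, this gives a uniform $L^\infty$ bound $\|x_k\|_\infty\le |b-a|^{-1}\|x_k\|_{L^1}+|x_k|_\bv\le C$. Second, writing each $x_k$ as a difference of uniformly bounded monotone functions (Jordan decomposition), Helly's selection theorem yields a subsequence converging pointwise everywhere on $\Omega$ to some bounded $x$. Third, dominated convergence with the constant majorant $(2C)^p$ on the bounded set $\Omega$ gives $x_k\to x$ in $L^p(\Omega)$ for every $p<\infty$. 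Finally, part~i) shows $x\in\bv(\Omega)$ with $|x|_\bv\le\liminf_k|x_k|_\bv$.

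The only delicate point is the choice of representatives in~ii). Pointwise statements such as Helly's theorem require the good representative, and one must check that changing $x_k$ on a null set affects neither $|x_k|_\bv$ nor the $L^p$-limit; both hold because these quantities depend only on the equivalence class. Everything else is routine.
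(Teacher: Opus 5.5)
Your proof is correct. The paper gives no argument of its own; it only refers to \cite[Chapter~5]{EG92}, where both facts are proved in general dimension $d$.

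For~i), your argument is the standard one behind that citation: a supremum of the $L^p$-continuous linear functionals $x\mapsto\int_\Omega x\,g'$ is lower semicontinuous. The same argument also works under weak $L^p$ convergence, which matches the word ``weakly''. Your caveat is a genuine correction to the statement as written. Without $\liminf_k|x_k|_\bv<\infty$ the conclusion $x\in\bv(\Omega)$ is false: on $(-1,1)$, the functions equal to $\sin(1/t)$ for $|t|\ge 1/k$ and to $0$ otherwise lie in $\bv$ and converge in every $L^p$ to a function of infinite variation.

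For~ii), your route differs from the standard $d$-dimensional proof behind the citation. That proof works on bounded Lipschitz domains. It approximates each $x_k$ by smooth functions that are $L^1$-close and have uniformly bounded total variation, and applies Rellich compactness of $W^{1,1}$ in $L^1$. It then reaches $p<d/(d-1)$ through the Sobolev inequality for $\bv$ and interpolation.

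Your one-dimensional argument avoids this machinery: good representatives, the oscillation bound, Helly's theorem, and dominated convergence. In $d=1$ it also gives more. Your uniform $L^\infty$ bound plays the role of the Sobolev embedding at the endpoint $1^*=\infty$. You obtain pointwise convergence of good representatives, and hence compactness in every $L^p$ with $p<\infty$. This is exactly the right reading of the $d/(d-1)$ flagged in the source.

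The price is that your argument is tied to $d=1$ and to $\Omega$ being an interval. The estimate $|x(s)-x(t)|\le|x|_\bv$ uses connectedness, and on a bounded open set with infinitely many components compactness actually fails. There, the normalized indicators of the components have zero variation, unit $L^1$ norm, and pairwise $L^1$ distance $2$. So you were right to take $\Omega=(a,b)$ from the introduction rather than rely on \textbf{(A1)} alone.
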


The next lemma is devoted to the investigation of the relevant properties of operators $\W$. 
Such properties will be used throughout this paper. 

\begin{lemma} \label{lemma:calK-Ppc}
Let the doble-well operators $\W$ defined in Section~\ref{sec:1} and $P_{pc}$ the operator defined in \eqref{eq:def-pcls}. 
For $1\leq p < 2$, the following assertions hold true:
\begin{itemize}
\item[i)] $\W$ is continuous maps from $L^{4n}(\Omega)$ to
$L^2(\Omega)$ \\

\item[ii)] If $\| \W(\phi) \|_{L^1} = 0$ or $\| \W(\phi)
\|_{L^2} = 0$ for some $\phi \in L^{4n}(\Omega)$, then $\phi(x) \in \{
0, 1 \}$ a.e. in $\Omega$.





\end{itemize}
\end{lemma}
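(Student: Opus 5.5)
For (i), the plan is to factor $\W(\phi)=\phi^n(\phi-1)^n$ into pointwise products and control them with Hölder's inequality. The first step is to observe that $\phi\mapsto \phi-1$ maps $L^{4n}(\Omega)$ into itself, because $\Omega$ is bounded by \textbf{(A1)}, so constants lie in $L^{4n}(\Omega)$. The second step uses the generalized Hölder inequality: for $f_1,\dots,f_{2n}\in L^{4n}(\Omega)$ it gives $\|f_1\cdots f_{2n}\|_{L^2}\le \prod_{i}\|f_i\|_{L^{4n}}$, since $2n$ factors of exponent $1/(4n)$ sum to $1/2$. Applying this with $n$ copies of $\phi$ and $n$ copies of $\phi-1$ shows that $\W$ maps $L^{4n}(\Omega)$ into $L^2(\Omega)$.

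For continuity I would write $\W(\phi)-\W(\psi)$ as a telescoping sum over the $2n$ factors. Each summand has the form $g_1\cdots g_{j-1}(h_j-k_j)g_{j+1}\cdots g_{2n}$, where every $g_i$ is $\phi$, $\psi$, $\phi-1$ or $\psi-1$, and $h_j-k_j$ equals $\phi-\psi$. The same Hölder estimate then yields
$$\|\W(\phi)-\W(\psi)\|_{L^2}\le 2n\,C^{2n-1}\|\phi-\psi\|_{L^{4n}},$$
with $C$ a bound for $\|\phi\|_{L^{4n}}+\|\psi\|_{L^{4n}}+|\Omega|^{1/(4n)}$. This makes $\W$ locally Lipschitz, and hence continuous.

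For (ii), either norm vanishing forces $\W(\phi)=0$ almost everywhere. In the $L^1$ case this holds because $\int_\Omega|\W(\phi)|=0$ with a nonnegative integrand; the $L^2$ case is analogous, and there $\W(\phi)\in L^2$ is guaranteed by (i). So $\phi(x)^n(\phi(x)-1)^n=0$ for a.e.\ $x$. Since $\mathbb R$ has no zero divisors, at each such point either $\phi(x)=0$ or $\phi(x)=1$, and therefore $\phi(x)\in\{0,1\}$ almost everywhere. One measure-theoretic check remains: the exceptional set is null, because it is contained in $\{|\W(\phi)|>0\}$, which has measure zero.

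I expect no real obstacle. The only point requiring care is the exponent bookkeeping in the Hölder step for (i), in particular that the product of $2n$ factors lands in $L^2$ rather than only in $L^1$. The hypothesis $1\le p<2$ plays no role in these two items.
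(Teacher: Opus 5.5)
Your proposal is correct and follows essentially the paper's route: the paper also factors out $\phi-\psi$ using $a^m-b^m=(a-b)(a^{m-1}+\cdots+b^{m-1})$ and estimates the remaining products, and it treats (ii) as immediate from the pointwise form of $\W$. Your telescoping over the $2n$ factors with the generalized H\"older inequality is the precise version of the paper's ``Cauchy--Schwarz'' step (for general $n$ one needs H\"older's inequality with exponents matched to $L^{4n}$), and it also gives an explicit local Lipschitz constant.
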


\begin{proof}
The continuity of $\W$ follows from the fact that  
\begin{align*}
\int_\Omega (\W(\phi) -\W(\psi) )^2 dx \ = &
 \ \int_\Omega (\phi^{n} - \psi^n)^2(\phi-1)^{2n} + 2(\phi^n - \psi^n)((\phi - 1)^n - (\psi-1)^n)(\phi- 1)^n \psi^n dx \\
 & +  \int_\Omega \psi^{2n}((\phi - 1)^n - (\psi-1)^n)^2dx\, ,  
\end{align*}
together with the fact that 
$(a^m - b^m) = (a-b)(a^{m-1} + a^{m-2}b+ \cdots a b^{m-2} + b^{m-1})$, 
the Cauchy-Schwarz inequality and continuous embedding of $L^{4n}(\Omega)$ in $L^2(\Omega)$. 

Assertions~ii) 
follows immediately form the definition of $\W$. 

\end{proof}

In the following lemma, we prove the weak continuity of the operator $P_{pc}$ and the
weak continuity on zero of $\W$.

\begin{lemma}\label{lemma:aux-1}
Let the operator $P_{pc}$ defined as in \eqref{eq:def-pcls}. Then:
\begin{itemize}
\item[ i)] For any sequence $\{\phi_k\}$ of $L^{4n}(\Omega)$ with $\phi_k
\rightharpoonup \phi$ in $L^{4n}(\Omega)$  (or $\phi_k \longrightarrow \phi$ in $L^{4n}(\Omega)$),
we have that $P_{pc}(\phi_k)
\rightharpoonup P_{pc}(\phi)$ in (or $P_{pc}(\phi_k) \longrightarrow P_{pc}(\phi)$) $L^2(\Omega)$.
In particular, this result follows for the bar code decoding application, where $P_{pc}$ is the identity.

\item[ ii)] For any sequence $\{\phi_k\}$ of $L^{4n}(\Omega)$ with $\phi_k \rightharpoonup \phi$ in $L^{4n}(\Omega)$ such that 
$\W(\phi_k) = 0$, we have that $\W(\phi)=0$.
\end{itemize}
\end{lemma}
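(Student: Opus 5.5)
For assertion~i) I would use Remark~\ref{remark:PCLS-bar-codes}: with $c^1=0$, $c^2=1$, \eqref{eq:def-pcls} gives $P_{pc}(\phi)=c^1(1-\phi)+c^2\phi=\phi$. More generally $P_{pc}$ is affine, $P_{pc}(\phi)=c^1+(c^2-c^1)\phi$. Since $\Omega$ is bounded (\textbf{A1}), Hölder's inequality gives the continuous embedding $L^{4n}(\Omega)\hookrightarrow L^2(\Omega)$ with constant $|\Omega|^{1/2-1/(4n)}$. So $P_{pc}$ is a constant plus a bounded linear operator $L^{4n}(\Omega)\to L^2(\Omega)$.
\begin{itemize}
\item Strong continuity then follows from $\|P_{pc}(\phi_k)-P_{pc}(\phi)\|_{L^2}\le |c^2-c^1|\,C_\Omega\|\phi_k-\phi\|_{L^{4n}}$.
\item For weak continuity, fix $g\in L^2(\Omega)\subset L^{(4n)'}(\Omega)$, which holds because $(4n)'\le 2$ and $\Omega$ is bounded. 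Then $\langle P_{pc}(\phi_k),g\rangle = c^1\!\int_\Omega g+(c^2-c^1)\langle\phi_k,g\rangle \to \langle P_{pc}(\phi),g\rangle$.
\end{itemize}
This step is routine.

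For assertion~ii) I would first apply Lemma~\ref{lemma:calK-Ppc}~ii) to each $\phi_k$: from $\W(\phi_k)=0$ we get $\phi_k(x)\in\{0,1\}$ a.e., so each $\phi_k=\chi_{E_k}$ for a measurable set $E_k\subset\Omega$. It then suffices to show $\phi(x)\in\{0,1\}$ a.e., because then $\W(\phi)=\phi^n(\phi-1)^n=0$ pointwise.

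The route I would take is to upgrade weak convergence to a.e. convergence along a subsequence. The functions $\phi_k$ lie in the domain $L^{4n}(\Omega)\cap\bv(\Omega)$ of $\W$ fixed in \eqref{eq:inv-probl-fpc}. In every place the lemma is used (minimizing sequences of $\Fa$ or $\La$), the term $\beta_3|\phi_k|_\bv$ keeps $|\phi_k|_\bv$ bounded, and $\|\phi_k\|_{L^1}\le|\Omega|$ holds automatically. Lemma~\ref{lemma:BV}~ii) then gives a subsequence converging strongly in $L^1(\Omega)$. By uniqueness of weak limits, its limit must be $\phi$. Passing to a further subsequence, $\phi_k\to\phi$ a.e., and since $\{0,1\}$ is closed, $\phi(x)\in\{0,1\}$ a.e. This gives $\W(\phi)=0$. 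As a cross-check, one can use the identity $\phi_k^2=\phi_k$: with strong $L^2$ convergence, which follows from strong $L^1$ convergence and the uniform bound $0\le\phi_k\le1$, this passes to the limit as $\phi^2=\phi$.

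The main obstacle is exactly this upgrade. Weak $L^{4n}$ convergence alone does not preserve the nonconvex constraint: rapidly oscillating characteristic functions converge weakly to $1/2$. The argument therefore has to use the $\bv$ structure of the domain of $\W$, through Lemma~\ref{lemma:BV}, and I would state explicitly that the bounded $\bv$ seminorm comes from the setting in which the lemma is applied. If one prefers to avoid BV, the alternative check is convergence of norms. Testing the weak convergence against $1$ gives $\|\phi_k\|_{L^2}^2=\int_\Omega\phi_k\to\int_\Omega\phi$. If $\int_\Omega\phi=\|\phi\|_{L^2}^2$, then weak convergence plus convergence of norms gives strong $L^2$ convergence, and the same closing argument applies. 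Under either route, this compactness step is where the proof really lives.
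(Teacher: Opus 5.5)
Your part~i) is the paper's argument: an affine map composed with the continuous embedding $L^{4n}(\Omega)\hookrightarrow L^2(\Omega)$ on a bounded domain. You simply write it out with explicit constants and test functions.

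For part~ii) your route differs from the paper's, and your diagnosis is the correct one. The paper says $\W(\phi)=0$ follows from the weak convergence of $\phi_k$ in $L^2(\Omega)$ together with Lemma~\ref{lemma:calK-Ppc}~ii). That step is invalid, because $\phi\mapsto\phi^n(\phi-1)^n$ is not weakly sequentially continuous. In fact assertion~ii), as literally stated, is false. On $\Omega=(-1,1)$ let $\phi_k:=\chi_{\{\sin(k\pi x)>0\}}$. Then $\W(\phi_k)=0$, and each $\phi_k$ even lies in $L^{4n}(\Omega)\cap\bv(\Omega)$, with $|\phi_k|_{\bv}=2k-1$. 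Moreover $\phi_k\rightharpoonup 1/2$ in $L^{4n}(\Omega)$, yet $\W(1/2)=(-1/4)^n\neq0$.

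So what you prove is necessarily a corrected statement, with the extra hypothesis $\sup_k|\phi_k|_{\bv}<\infty$. Your compactness argument for it is sound: Lemma~\ref{lemma:BV}~ii) in $L^1$, identification of the strong limit with $\phi$ by testing against $L^\infty(\Omega)\subset L^{(4n)'}(\Omega)$, an a.e.\ convergent subsequence, and closedness of $\{0,1\}$. You are also right that this bound is what the applications in Theorem~\ref{th:admissible-pcls} actually provide, through the term $\alpha\beta_3|\phi|_{\bv}$. It holds along minimizing sequences, and along $\alpha_j\to0$ by comparison with $(\hat K,\hat K,\hat\phi)$ when $\delta^2/\alpha\to0$. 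Write this hypothesis into the statement itself rather than leaving it to ``the setting''.

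Finally, drop the ``alternative check'' that avoids $\bv$, because it is circular. The weak limit satisfies $0\le\phi\le1$ a.e., since that set is convex and closed, hence weakly closed. Therefore $\int_\Omega\phi-\|\phi\|_{L^2}^2=\int_\Omega\phi(1-\phi)\ge0$, with equality exactly when $\phi\in\{0,1\}$ a.e. Your condition $\int_\Omega\phi=\|\phi\|_{L^2}^2$ is thus the conclusion itself. What this computation does show is that, under the hypotheses of ii), $\W(\phi)=0$ holds if and only if $\phi_k\to\phi$ strongly in $L^2(\Omega)$. That is a clean way to see why some compactness assumption, such as the uniform $\bv$ bound, cannot be dispensed with.
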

\begin{proof}
Since $\Omega$ is bounded, we have that
$L^{4n}(\Omega) \subset L^2(\Omega) \subset L^1(\Omega)$. 
Therefore, $\phi_k \rightharpoonup \phi$ (or $\phi_k \longrightarrow \phi$) in $L^2(\Omega)$. 
Furthermore, $\phi_k \rightharpoonup \phi$ (or $\phi_k \longrightarrow \phi$) in $L^1(\Omega)$.

 By definition, the map $P_{pc}$ is affine in
$L^2(\Omega)$. Therefore, $P_{pc}(\phi_k) \rightharpoonup
P_{pc}(\phi)$ (or $P_{pc}(\phi_k) \longleftarrow
P_{pc}(\phi)$) in $L^2(\Omega)$.

To prove Item~ii), we remark that if $\W(\tilde{\phi}) = 0$ then
$\tilde{\phi}(\tilde{\phi} - 1) = 0$ a.e.. The reciprocity is also
true for any $\tilde{\phi} \in L^{4n}(\Omega)$. Now, the assertion follows from the weak
convergence of $\phi_k$ in $L^2(\Omega)$ and Lemma~\ref{lemma:calK-Ppc}~ii).
\end{proof}

Next, we demonstrate the continuity of the forward operator $B$.

\begin{prop}\label{th:B-continuos}[Continuity of $B(\cdot, \cdot)$]
Let the operator $B$ be defined defined as in~\eqref{eq:ip}. Furthermore, let $(\K_k, K_k, \phi_k)$
and $(\K, K, \phi)$ be any vectors of admissible functions, i.e., satisfying
Assumptions~\textbf{A1)}-\textbf{A2)}. If $K_k \to K$ in $L^2(\Omega)$ and
$\phi_k \to \phi$ in $L^{4n}(\Omega)$, then $B(K_k, P_{pc}(\phi_k)) \to
B(K, P_{pc}(\phi))$ in $L^2(\Omega)$.
\end{prop}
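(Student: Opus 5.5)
The proof rests on bilinearity of the convolution together with Young's inequality on the bounded interval $\Omega$. By Remark~\ref{remark:PCLS-bar-codes}, $P_{pc}$ is the identity in the barcode setting. In general it is affine, so by Lemma~\ref{lemma:aux-1}~i) we have $P_{pc}(\phi_k)\to P_{pc}(\phi)$ in $L^2(\Omega)$ whenever $\phi_k\to\phi$ in $L^{4n}(\Omega)$. Write $u_k:=P_{pc}(\phi_k)$ and $u:=P_{pc}(\phi)$. Since $\Omega$ is bounded (Assumption~\textbf{A1)}), $L^{4n}(\Omega)\hookrightarrow L^2(\Omega)\hookrightarrow L^1(\Omega)$ with constants depending only on $|\Omega|$. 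Hence $u_k\to u$ in both $L^2(\Omega)$ and $L^1(\Omega)$, and $\sup_k\norm[L^1]{u_k}<\infty$.

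Next I split the difference in the standard bilinear way:
\begin{equation*}
K_k\ast u_k - K\ast u \ = \ (K_k-K)\ast u_k \ + \ K\ast(u_k-u)\, .
\end{equation*}
Throughout, convolution is understood after extending the functions by zero outside $\Omega$ (or restricting the result to $\Omega$). Young's inequality $\norm[L^2]{f\ast g}\le \norm[L^2]{f}\,\norm[L^1]{g}$ then bounds the first term by $\norm[L^2]{K_k-K}\,\sup_j\norm[L^1]{u_j}$, which tends to zero. The second term is bounded by $\norm[L^2]{K}\,\norm[L^1]{u_k-u}$, which also tends to zero. Together these give $B(K_k,P_{pc}(\phi_k))\to B(K,P_{pc}(\phi))$ in $L^2(\Omega)$.

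The main point to handle with care is that the convolution is well defined as a map into $L^2(\Omega)$ and that Young's inequality applies. I would justify this by zero extension to $\mathbb{R}$: functions in $L^2(\Omega)$ and $L^1(\Omega)$ extend to $L^2(\mathbb{R})$ and $L^1(\mathbb{R})$ with the same norms, the convolution then lies in $L^2(\mathbb{R})$, and restricting to $\Omega$ only decreases the norm. This also shows that the admissibility condition $K\ast\phi\in L^2(\Omega)$ is automatic for such limits. None of the constraints $\W(\phi)=0$ or $M(\K,K)=0$ is needed. The variable $\K$ plays no role, and the limit pair need not be admissible for the continuity statement itself.
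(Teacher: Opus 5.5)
Your proof is correct and follows essentially the same route as the paper: Lemma~\ref{lemma:aux-1}~i) together with the embedding $L^2(\Omega)\hookrightarrow L^1(\Omega)$ on the bounded domain, a bilinear splitting of $K_k\ast u_k - K\ast u$, and Young's inequality $\|f\ast g\|_{L^2}\le \|f\|_{L^2}\|g\|_{L^1}$. The only difference is the direction of the splitting. You write $(K_k-K)\ast u_k + K\ast(u_k-u)$ and use boundedness of $\|u_k\|_{L^1}$, while the paper writes $K_k\ast(u_k-u)+(K_k-K)\ast u$ and tacitly uses boundedness of $\|K_k\|_{L^2}$. This difference is immaterial, and your zero-extension remark usefully makes explicit why Young's inequality applies on $\Omega$.
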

\begin{proof}
It follows from the assumptions and Lemma~\ref{lemma:aux-1}-i) that $P_{pc}(\phi_k)$ tends
to $P_{pc}(\phi)$ in $L^2(\Omega)$. Since $\Omega$ is bounded, this convergence is also
valid for $L^1(\Omega)$.
Consequently, it follows from Young's inequality for convolution~\cite{Ada75},
Assumption~\textbf{A1)}, and the convergence of $K_k$ to $K$ in $L^2(\Omega)$ that
\begin{align*}
\norm{B(K_k,P_{pc}(\phi_k))  - B(K, P_{pc}(\phi))}_{L^2(\Omega)} 
   & = \norm{K_k \ast P_{pc}(\phi_k) - K \ast P_{pc}(\phi)}_{L^2(\Omega)} \\
  & \hskip -2cm\leq \norm{K_k \ast (P_{pc}(\phi_k) - P_{pc}(\phi))}_{L^2(\Omega)} + \norm{(K_k - K)  \ast P_{pc}(\phi)}_{L^2(\Omega)} \\
  &  \hskip -2cm \leq \norm{K_k}_{L^2(\Omega)}\norm{P_{pc}(\phi_k) - P_{pc}(\phi)}_{L^1(\Omega)} + \norm{K_k - K}_{L^2(\Omega)}\norm{P_{pc}(\phi)}_{L^1(\Omega)} \,,
\end{align*}
concluding the assertion. 
\end{proof}

\bibliographystyle{amsplain}
\bibliography{lset-pcls}

\end{document}